\theoremstyle{plain}
 \newtheorem{thm}{Theorem}[section]
 \newtheorem{prop}[thm]{Proposition}
 \newtheorem{lem}[thm]{Lemma}
 \newtheorem{cor}[thm]{Corollary}
\theoremstyle{definition}
 \newtheorem{exm}[thm]{Example}
 \newtheorem{dfn}[thm]{Definition}
 \newtheorem*{dfn*}{Definition}
 \newtheorem*{hyp*}{Hypothesis}
 \newtheorem*{hyps*}{Hypotheses}
 \newtheorem{rem}[thm]{Remark}
\theoremstyle{remark}
 \newtheorem*{rem*}{Remark}
 \newtheorem*{warn*}{Warning}
 \newtheorem*{rems*}{Remarks}
 \newtheorem*{not*}{Notation}
 \newtheorem*{nots*}{Notations}
 \newtheorem*{convs*}{Conventions}
 \newtheorem*{exm*}{Example}
 \newtheorem*{pf*}{Proof}
 \numberwithin{equation}{section}
\renewcommand{\leq}{\leqslant}
\renewcommand{\geq}{\geqslant}
\newcommand{\QQ}{\mathbb{Q}}
\newcommand{\ZZ}{\mathbb{Z}}
\newcommand{\CC}{\mathbb{C}}
\newcommand{\RR}{\mathbb{R}}
\newcommand{\cI}{\mathcal{I}}
\newcommand{\p}{\mathfrak{p}}
\newcommand{\Cp}{\mathbb{C}_p}
\newcommand{\Zp}{\mathbb{Z}_p}
\newcommand{\Bf}{\mathbb{F}}
\newcommand{\NN}{\mathbb{N}}
\newcommand{\sopfr}{\text{\it sopfr}\:}
\begin{document}

\title[On a multiplicative non-Hecke twist of motivic L-functions]{On a multiplicative non-Hecke twist of motivic L-functions}

\subjclass[2010]{Primary 11R42; Secondary 11M41, 11R23, 11S80}

\keywords{Euler product, Dirichlet series, $L$-function, Multiplicative character, non-Hecke character, $p$-adic Euler product, $p$-adic Dirichlet series}

\author{Heiko Knospe}
\author{Andrzej D\k{a}browski}

\email{heiko.knospe@th-koeln.de}
\email{andrzej.dabrowski@usz.edu.pl \and dabrowskiandrzej7@gmail.com } 
\address{Institute of Computer and Communication Technology, Technische Hochschule K\"{o}ln, Betzdorfer Str. 2, 50679 K\"{o}ln, Germany}
\address{Institute of Mathematics, University of Szczecin, Wielkopolska 15, 70-451 Szczecin, Poland}

\begin{abstract} We investigate the twisting of motivic $L$-functions 
by a family of multiplicative characters $\psi$, defined on prime ideals $\mathfrak{p}$ via $\psi(\mathfrak{p})=\alpha^{N(\mathfrak{p})}$ for a fixed $\alpha \in \CC$. One can extend
 $\psi$ to a continuous non-Hecke character on the idele group of a number field.  For $|\alpha|<1$, the resulting $\psi$-twisted $L$-function  has interesting  analytic properties: an enhanced half-plane of absolute convergence, preservation of the Euler product structure, and 
 meromorphic continuation to the complex plane. We give applications to 
  Dirichlet $L$-functions and $L$-functions associated to modular forms. Furthermore, we show that $\psi$-twisting allows the construction of convergent $p$-adic Dirichlet series and $p$-adic Euler products which have some similarities with their complex counterparts.
\end{abstract}

\maketitle

  
\section{Introduction}\label{Sect1}

 Let $L(M,s) = \prod_v (P_v(N(\p_v)^{-s}))^{-1}$ 
 be the $L$-function of a pure motive  $M$ of degree $d$ and weight $w$ over a number field $K$. The Euler product is taken over the finite places $v$ of $K$, where $\p_v$ 
 is the associated ideal. For places $v$ of good reduction, the local factor
 $P_v(T)^{-1}$ is  given by the characteristic polynomial $P_v(T)=\det(1-\text{Frob}_v\, T\,  |\, V_l)$, where $\text{Frob}_v$ is the Frobenius acting on an appropriate $l$-adic cohomology space $V_l$ (see \cite{deligne}).
 The polynomial $P_v(T)$ has degree $d$ and its
 inverse roots (the eigenvalues of $\text{Frob}_v$) have absolute value $N(\p_v)^{w/2}$. Standard examples include:
 \begin{enumerate}
 \item The Riemann zeta function, where $P_p(T)=1-T$, giving $d=1$ and $w=0$.
 \item Dirichlet $L$-function for a character $\chi$, where $P_p(T)=1-\chi(p)T$, giving $d=1$ and $w=0$.
 \item $L$-function of a unitary Hecke character $\psi$, where $P_v(T)=1-\psi(\p_v)T$, $d=1$ and $w=0$. 
 \item $L$-function of a newform $f$ of weight $k$, where $P_p(T)=1-a_p(f) T + p^{k-1} T^2$, giving $d=2$ and $w=k-1$.
 \item $L$-function of the Rankin-Selberg product of two newforms of weight $k$, where $P_p(T)$ has degree $d=4$ and $w=2k-2$.
 \end{enumerate}
 The Euler product  converges absolutely for $\Re(s) > 1 + \frac{w}{2}$. It is conjectured (and known in many cases, including the above examples) that $L(M,s)$ admits
 a meromorphic continuation to the entire complex plane and satisfies a functional equation.
 
 This article investigates twists of $L$-functions by a multiplicative character $\psi : \mathcal{I}_K \rightarrow \CC^{\times}$ defined on the group of fractional ideals of $K$.
 A primary objective is to expand the domain of convergence of the Dirichlet series and the Euler product.
 Twisting $L(M,s)$ by $\psi$ means that each local polynomial $P_v(T)$ is replaced with $P_v( \psi(\p_v) T)$. Standard twists, such as
 Dirichlet characters or unitary Hecke characters satisfy $| \psi(\p_v) | = 1$ and thus do not alter the half-plane of  convergence.
 A non-unitary Hecke characters $\psi$, for which 
 $ | \psi(\p_v) | = N(\p_v)^{k} $ for some $k \in \RR$, merely shifts the half-plane of absolute convergence to $\Re(s) > 1 + \frac{w}{2} + k$. To achieve a more significant expansion of the convergence domain, we need a character whose
absolute value  $ | \psi(\p_v) |$ decays faster than any power of $N(\p_v)^{-1}$. Such a character is not a Hecke character,
 and the resulting $L$-function $L(M,s,\psi)$ is not motivic.  Nevertheless, as we will show, $L(M,s,\psi)$ has favourable  analytic properties.  
  
The main results of this article are as follows. We define a new family of multiplicative characters $\psi$ and study the associated twisted $L$-functions.
Theorem \ref{merom} establishes that the $\psi$-twisted $L$-function $L(M,s,\psi)$ has an expanded half-plane of absolute convergence. Furthermore, its Euler product defines a meromorphic function on the complex plane. Theorem \ref{modthm} provides the corresponding result for $L$-functions of newforms.  On the $p$-adic side, we show that $\psi$-twisting yields a convergent $p$-adic Dirichlet series that also admits a convergent Euler product. Finally, Theorem \ref{analytic} gives the Mahler expansion of the $\psi$-twisted $p$-adic series.

\section{Multiplicative non-Hecke characters}
\label{Sect2}
In this section, we define a family of multiplicative twists and analyse their properties.

\begin{dfn}
Let $K$ be a number field, $\cI_K$ its group of fractional ideals and $\alpha \in \CC^*$ a parameter. 
Define a family of multiplicative characters $\psi: \cI_K \rightarrow \CC^*$ by
$$ \psi(\p) = \alpha^{N(\p)} $$
on prime ideals $\p$ 
and extending to all of $\cI_K$ multiplicatively.
\end{dfn}

\begin{rem} This character 
$\psi$ can also be viewed as an unramified character on the group of {\em finite ideles of $K$}. By setting it to be trivial on the infinite places of $K$, i.e., $\psi_{\infty}(x)=\bf{1}$, we obtain a character on the full idele group. However, $\psi$ cannot be turned into a Hecke character (an idele class character). In fact, 
the continuous characters of $\RR^*$ and $\CC^*$ are of the form $x \mapsto sgn(x)^m |x|^{s}$ or $z \mapsto (z/|z|)^n |z|^{s}$, where $m \in \{0,1\}$, $n \in \ZZ$ and $s \in \CC$ (see \cite{neukirch}). 
The character $\psi$ is of exponential type on the finite ideles and cannot be balanced by the infinite part which is necessarily of polynomial type. So triviality on the principal ideles of $K$ cannot be achieved.
\end{rem}

For the remainder of this article, we specialise to $K=\QQ$. Here, ideals are generated by positive integers and 
$\psi$ becomes a completely multiplicative arithmetic function on $\NN$. Note that $\psi(n) \neq \alpha^n$ for composite $n$. The twist  $n \mapsto \alpha^n$ is a character of the {\em additive group} $\ZZ$, and twisting the Riemann zeta function by $\alpha^n$ famously yields Lerch's transcendent (\cite{erdelyi}). The 
special values of Lerch's zeta function at integers $s=m$ are $m$-th order polylogarithms $Li_m(\alpha)$.

Our multiplicative twist is fundamentally different. For $n \in \NN$, we have 
 $\psi(n) = \alpha^{S(n)} $, where $S(n)$ is the {\em sum of prime factors}  of $n$ counted with multiplicity.

\begin{dfn} Let $S: \NN \rightarrow \NN$ be the arithmetic function defined by  $S(1) =0$ and, for $n\geq 2$ with prime factorisation $n=p_1^{k_1} p_2^{k_2} \cdots p_r^{k_r}$, by
$$ S(n) = k_1 p_1 + k_2 p_2 + \dots + k_r p_r .$$
 $S(n)$ is often denoted by  $\sopfr(n)$ ({\em sum of prime factors with repetition}) and known as  {\em integer logarithm of $n$} (see OEIS \cite{oeis} A001414).
 \end{dfn}
 
The first few elements of $S(n)$ for $n=1,2,\dots,10$ are $0, 2, 3, 4, 5, 5, 7, 6, 6, 7$ (see Figure \ref{fig-sopfr}). 

\begin{prop} Let $S(n)=\sopfr(n)$ be the  integer logarithm. 
\begin{enumerate}
\item[(a)] $S$ is a completely additive function: $S(n_1 n_2) = S(n_1) + S(n_2)$ for all $n_1,n_2 \in \NN$.
\item[(b)] The character $\psi(n)=\alpha^{S(n)}$ is completely multiplicative.
\item[(c)]  For all $n \in \NN$, we have the lower bound
$S(n) \geq \frac{3}{\log(3)} \log(n) $, with equality if and only if $n=3^k$, $k \geq 0$.
\item[(d)] For any $X \geq 3$, we have $S(n) \geq \frac{X}{\log(X)} \log(n)$ for all $n \in \NN$ whose prime factors $p$ satisfy $p \geq X$.  
\end{enumerate}
\label{f}
\end{prop}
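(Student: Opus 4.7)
The plan is as follows. Parts (a) and (b) are essentially bookkeeping. Part (a) follows immediately from the uniqueness of prime factorisation: writing $n_1 = \prod p^{a_p}$ and $n_2 = \prod p^{b_p}$ gives $n_1 n_2 = \prod p^{a_p+b_p}$, and grouping $p(a_p+b_p)$ shows $S(n_1 n_2) = S(n_1)+S(n_2)$. Part (b) is then immediate by applying $\alpha^{x+y}=\alpha^x \alpha^y$ to (a), which yields complete multiplicativity of $\psi$.

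Parts (c) and (d) both reduce, via complete additivity and the formula $\log n = \sum k_i \log p_i$, to a statement about individual primes. If $n = \prod p_i^{k_i}$, then
\[ \frac{S(n)}{\log n} \;=\; \frac{\sum k_i\, p_i}{\sum k_i \log p_i}, \]
which is a weighted average of the quantities $p_i/\log p_i$. Hence any lower bound on $p/\log p$ valid for the primes appearing in $n$ transfers directly to a lower bound on $S(n)/\log n$, and the case of equality is exactly the case where all the $p_i$ in the factorisation of $n$ attain that bound.

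The analytic input is thus the behaviour of $f(x) = x/\log x$. A direct computation gives $f'(x) = (\log x - 1)/(\log x)^2 > 0$ for $x > e$, so $f$ is strictly increasing on $(e,\infty)$. Since $e < 3$, we have $f(3) < f(p)$ for every prime $p \geq 5$, and the remaining comparison $f(2) = 2/\log 2 > 3/\log 3 = f(3)$ is a numerical check. Thus the minimum of $p/\log p$ over all primes is uniquely attained at $p=3$, which proves (c) together with the equality case $n = 3^k$ (including $k=0$, where both sides vanish). For (d), since $X \geq 3 > e$, the monotonicity of $f$ on $[X,\infty)$ gives $f(p) \geq f(X)$ for every prime $p \geq X$, so $S(n)/\log n \geq X/\log X$ whenever every prime factor of $n$ is at least $X$.

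The only mild subtlety, and the step where one must be careful, is that the monotonicity of $f$ fails on $(0,e)$, so the prime $p=2$ has to be handled by a separate numerical comparison. It is a fortuitous arithmetical fact that $f(2) > f(3)$; this is precisely what makes the constant $3/\log 3$, rather than $2/\log 2$, the correct one in (c), and once this observation is in hand everything else falls out by convex-combination bookkeeping.
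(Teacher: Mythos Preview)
Your proof is correct and follows essentially the same approach as the paper: reduce (c) and (d) to primes via the complete additivity of $S$ and $\log$, then analyse $f(x)=x/\log x$ via its derivative to get monotonicity for $x>e$, handling $p=2$ by the separate numerical check $f(2)>f(3)$. Your weighted-average phrasing of the reduction is a slightly more explicit packaging of the same step the paper does in one line.
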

\begin{proof} (a) and (b) are clear from the definitions. Now using (a) and the additivity of the $\log$-function, it is sufficient to show (c) and (d) for {prime} numbers $p$. In this case, we have $S(p)=p$. 
We analyse the function $f(x)=\frac{x}{\log(x)}$ for $x>1$. The derivative shows that $f(x)$ is increasing for  $x > e$. For primes numbers $p$, the minimum value is at $p=3$ since $f(2) > f(3)$. This implies $\frac{p}{\log(p)} \geq \frac{3}{\log(3)}$ for all primes $p$, with equality if and only if $p=3$. This proves (c). Part (d) follows from the same argument, using the fact that $\frac{p}{\log(p)} \geq \frac{X}{\log(X)}$ for all primes $p \geq X \geq 3$.
\end{proof}

\begin{figure}[h]
\includegraphics[width=8cm]{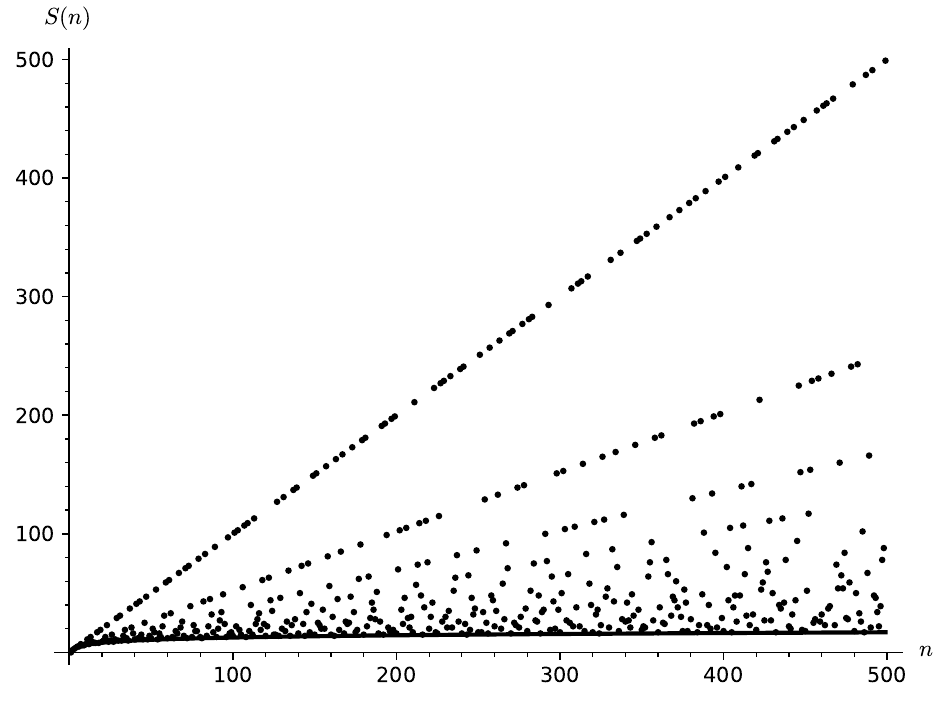}
\caption{Graph of the integer log function $S$. The diagonal dotted lines are given by the values of $S$ at $p$, $2p$, $3p$, $\dots$, where $p$ is a prime number. The graphic also shows the lower bound $\frac{3}{\log(3)} \log(n)$.}
\label{fig-sopfr}
\end{figure}   

\begin{rem}
The average order of $S(n)$ is known. Jakimcyuk \cite{jak} shows the 
asymptotic formula
$$\sum_{i=1}^n S(i) \sim \frac{\pi^2}{12} \frac{n^2}{\log(n)} .$$
\end{rem}

\begin{dfn} 
The set of preimages  $S^{-1}(\{m\})$ corresponds bijectively to the set of {\em partitions of $m$ into prime parts}. We denote the number of such partitions by $\vartheta(m)$.
 \end{dfn}
 For example, $\vartheta(7)=3$, as the partitions into prime parts are $\{7\}$, $\{5,2\}$, $\{3,2,2\}$ . One can show that $\vartheta(m)=1$ for $m=2,3,4$ and $\vartheta(m) \geq 2$ for $m\geq 5$. 
\begin{dfn} 
The $\psi$-twist of  an arithmetic function $f$ is given by point-wise multiplication:
$$    (\psi f)(n) =  \psi(n) \cdot f(n) = \alpha^{S(n)} \cdot f(n) .$$
\end{dfn}

\begin{prop} 
 Let $R$ be the ring of arithmetic function with point-wise addition and Dirichlet convolution. The map $f \mapsto \psi f$  is a ring isomorphism of $R$.
If $f$ is invertible, i.e., if $f(1) \neq 0$,  then the Dirichlet inverse of $ \psi f $ is $\psi f^{-1}$. In particular, if $f$ is completely multiplicative, then $( \psi f)^{-1} = \psi \mu f$, where $\mu$ is the Möbius function.
\end{prop}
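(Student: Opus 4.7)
The plan is to verify the three ring-isomorphism axioms in turn, exploiting the fact (already established in Proposition \ref{f}(b)) that $\psi$ is completely multiplicative, and then deduce the statement about inverses by functoriality.

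First I would handle additivity, which is immediate: $(\psi(f+g))(n) = \psi(n)(f(n)+g(n)) = (\psi f)(n) + (\psi g)(n)$. Next comes the key step, compatibility with Dirichlet convolution. For any $n \in \NN$,
\begin{equation*}
  (\psi(f*g))(n) = \psi(n) \sum_{d \mid n} f(d)\, g(n/d) = \sum_{d \mid n} \psi(d)\psi(n/d)\, f(d)\, g(n/d) = ((\psi f)*(\psi g))(n),
\end{equation*}
where the middle equality uses $\psi(n) = \psi(d)\psi(n/d)$ for every factorisation $n = d \cdot (n/d)$, i.e.\ complete multiplicativity of $\psi$. The multiplicative identity $e$ of $R$ (given by $e(1)=1$ and $e(n)=0$ for $n>1$) satisfies $\psi e = e$ because $\psi(1) = \alpha^0 = 1$. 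For bijectivity, observe that $\psi(n) = \alpha^{S(n)} \neq 0$ since $\alpha \in \CC^*$; hence $g \mapsto \psi^{-1} g$ is a two-sided inverse of $f \mapsto \psi f$ as a map of sets, and by the same argument as above it is also a ring homomorphism.

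For the statement about Dirichlet inverses, suppose $f(1) \neq 0$, so $f^{-1}$ exists and $f * f^{-1} = e$. Applying the ring homomorphism $\psi$ yields $(\psi f) * (\psi f^{-1}) = \psi e = e$, which identifies $\psi f^{-1}$ as the Dirichlet inverse of $\psi f$. Finally, for completely multiplicative $f$ it is a classical identity (provable directly by verifying $f * (\mu f) = e$ on prime powers) that $f^{-1} = \mu f$, whence $(\psi f)^{-1} = \psi(\mu f) = \psi \mu f$.

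The only non-routine step is the convolution compatibility, and even there the sole ingredient is the complete multiplicativity of $\psi$; everything else is bookkeeping. I do not anticipate a genuine obstacle.
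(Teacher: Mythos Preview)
Your proof is correct and follows the same standard argument the paper invokes: the paper's own proof is simply a one-line citation to Apostol for the fact that pointwise multiplication by a completely multiplicative function is a ring automorphism of $(R,+,*)$, and you have written out exactly that verification in detail.
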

\begin{proof} This is a standard result for the point-wise multiplication with a completely multiplicative function (see \cite{apostol}, chapter 2). 
\end{proof}

\section{$\psi$-Twists of $L$-functions}

We apply $\psi$-twisting to Dirichlet series and $L$-functions and analyse the properties of the resulting functions.

\subsection{Twists of Dirichlet Series}

\begin{dfn} 
Let $c(n)$ be an arithmetic function and $\sum^{\infty}_{n=1} c(n) n^{-s}$ the associated formal Dirichlet series.
For a non-zero parameter $\alpha \in \CC$, the corresponding $\psi$-twisted formal Dirichlet series is 
$$ \sum^{\infty}_{n=1} \psi(n) c(n) n^{-s} = \sum^{\infty}_{n=1} \alpha^{S(n)} c(n) n^{-s} .$$
\end{dfn}

\begin{rem} 
The twisted series can be viewed as a function in two variables, $s$ and $\alpha$.
For a fixed $s$ it is power series in $\alpha$. Rearranging the sum  gives
\begin{equation} 
\begin{aligned}
& \sum^{\infty}_{m=0}   \left( \sum_{n \in S^{-1}(\{m\})}^{\phantom{2}}   c(n) n^{-s} \right) \alpha^{m} \\ 
& = \  c(1) + (c(2)2^{-s}) \alpha^2 +  (c(3)3^{-s}) \alpha^3  +  (c(4)4^{-s}) \alpha^4  + \left(c(5)5^{-s}+c(6)6^{-s}\right)  \alpha^5 + \dots 
\end{aligned}
\label{alphaseries}
\end{equation}
The coefficient of $\alpha^m$ is 
a sum over all integers $n$ whose integer logarithm $S(n)$ is $m$. The number of such integers is $\vartheta(m)$, the number of partitions of $m$ into prime parts.
\end{rem}

\begin{exm}
Consider the $\psi$-twisted Riemann zeta function at $s=0$. Then $c(n)n^{-s} =1$ for all $n \in \NN$ and the coefficient of $\alpha^m$ is $\vartheta(m)$.  
The expansion has a well-known generating function:
$$ 1+ \sum^{\infty}_{m=2} \vartheta(m) \alpha^m  = \prod_{ p \text{ prime}} \frac{1}{1-\alpha^p}$$
\end{exm}

For $|\alpha|<1$, the twist introduces a strong convergence factor. The half-plane of convergence is expanded, as the following proposition shows.
\begin{prop} Let $ \sum^{\infty}_{n=1} c(n) n^{-s} $ be a Dirichlet series with  abscissa of absolute convergence $\sigma_a^0$, and let $|\alpha| \leq 1$. The
abscissa of absolute convergence $\sigma_a$ of the $\psi$-twisted Dirichlet series $\sum^{\infty}_{n=1} \alpha^{S(n)} c(n) n^{-s}$ satisfies
 
$$\sigma_a \leq  \sigma_a^0 + \frac{3}{\log(3)} \log(|\alpha|). $$
\label{conv}
\end{prop}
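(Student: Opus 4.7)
The plan is to reduce the claim to a term-wise comparison by invoking Proposition \ref{f}(c), which provides the exponent bound that drives the entire argument. The key observation is that raising $|\alpha| \le 1$ to a larger exponent produces a smaller number, so the lower bound $S(n) \ge \frac{3}{\log 3} \log n$ translates into an upper bound on the twist factor.

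First I would set $\kappa = \frac{3}{\log(3)} \log|\alpha|$; since $|\alpha| \le 1$, this constant is non-positive. Using $x \mapsto |\alpha|^x$ being non-increasing for $|\alpha| \le 1$, together with Proposition \ref{f}(c), I would deduce the pointwise estimate
$$ |\alpha|^{S(n)} \le |\alpha|^{\frac{3}{\log 3}\log n} = e^{(\log|\alpha|)\frac{3}{\log 3}\log n} = n^{\kappa} $$
for every $n \ge 1$. (For $n=1$ both sides equal $1$.) The case $|\alpha|=1$ is trivial since then the factor $|\alpha^{S(n)}|$ is identically $1$ and there is nothing to gain or lose.

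Next I would compare the twisted series term-by-term with the original. For $s \in \CC$,
$$ \bigl|\alpha^{S(n)} c(n)\, n^{-s}\bigr| \le |c(n)|\, n^{\kappa - \Re(s)} = |c(n)|\, n^{-(\Re(s) - \kappa)}, $$
so if $\Re(s) - \kappa > \sigma_a^0$, absolute convergence of the original Dirichlet series at the point $s - \kappa$ yields absolute convergence of the twisted series at $s$ by direct comparison. Rewriting the condition $\Re(s) > \sigma_a^0 + \kappa$ gives $\sigma_a \le \sigma_a^0 + \frac{3}{\log(3)}\log|\alpha|$, as claimed.

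There is no real obstacle here; the statement is essentially a packaging of Proposition \ref{f}(c). The only mild care needed is in tracking the sign of $\log|\alpha|$ and the monotonicity of $x \mapsto |\alpha|^x$, which reverses depending on whether $|\alpha|$ is less than, equal to, or greater than $1$. Since the proposition restricts to $|\alpha|\le 1$, the bound produced is a genuine expansion (or at worst preservation) of the half-plane of absolute convergence, consistent with the qualitative claim that $\psi$-twisting introduces a strong convergence factor.
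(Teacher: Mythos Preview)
Your argument is correct and follows exactly the same route as the paper: invoke Proposition~\ref{f}(c) to get $|\alpha|^{S(n)} \le n^{\frac{3}{\log 3}\log|\alpha|}$, then conclude by direct term-wise comparison with the original series. The paper's proof is more terse but identical in substance.
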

 \begin{proof} From Proposition \ref{f}
 we have $|\alpha|^{S(n)} \leq |\alpha| ^{  \frac{3}{\log(3)} \log(n)} = n^{ \frac{3}{\log(3)} \log |\alpha| }$. This shows the claim.
 \end{proof}
 The next proposition shows that the twisted Dirichlet series tends towards the original series as $\alpha \in \RR$ approaches $1$ from the left. In fact, a more general statement is true, where $\alpha \in \CC$ approaches $1$ from within a {\em Stolz sector}. 
 
 \begin{prop} Let $\sigma_a^0$ be the abscissa of absolute convergence of a Dirichlet series $ \sum^{\infty}_{n=1} c(n) n^{-s} $. For $\Re(s)>\sigma_a^0$ we have 
 $$ \lim_{\alpha \rightarrow 1-} \left( \sum^{\infty}_{n=1} \alpha^{S(n)} c(n) n^{-s} \right) =  \sum^{\infty}_{n=1} c(n) n^{-s} . $$
 \label{limit}
 \end{prop}
 \begin{proof} For $\Re(s)>\sigma_a^0$ and $|\alpha|<1$, the original Dirichlet series and the twisted one converge absolutely. Hence both series can be re-ordered and expanded in powers of $\alpha$ (see equation (\ref{alphaseries})). Now the claim follows from Abel's theorem.
 \end{proof}

If the coefficients are {\em multiplicative}, i.e., if $c(mn)=c(m)c(n)$ for $\gcd(m,n)=1$, the $\psi$-twisted series also has an Euler product:
\begin{equation}   \sum^{\infty}_{n=1} \alpha^{S(n)} c(n) n^{-s} = \prod_p \sum_{m=0}^{\infty}  \alpha^{S(p^m)} c(p^m)\: (p^m)^{-s} .
\label{multiplicative}
\end{equation}
Since $S(p^m)=mp$, this can be rewritten as 
$$ \prod_p \left( \sum_{m=0}^{\infty} c(p^m)\: (\alpha^p p^{-s})^m \right) . $$
This can be leveraged to find the exact region of convergence and to establish a meromorphic continuation.
We choose $X \geq 3$ and split this into a product over primes  $p<X$ and $p \geq X$, respectively, and expand the second product as a Dirichlet series to obtain
 \begin{equation}
 \left( \prod_{p<X} \sum_{m=0}^{\infty}  c(p^m)\: (\alpha^p p^{-s})^{m} \right) \left( \sum^{\infty}_{\substack{n=1 \\ p\nmid n \text{ if } p < X}}  \alpha^{S(n)} c(n)\: n^{-s} \right) .
\label{second}
\end{equation}
The second factor of (\ref{second}) converges absolutely in a larger half-plane $\Re(s) \geq \sigma_a^0 + \frac{X}{\log(X)} \log |\alpha | $. To this end, we note that $S(n) \geq \frac{X}{\log(X)} \log(n) $ by Proposition \ref{f}(c), and hence
$$|\alpha|^{S(n)} \leq |\alpha| ^{  \frac{X}{\log(X)} \log(n)} = n^{ \frac{X}{\log(X)}  \log|\alpha |} .$$
Since $\frac{X}{\log(X)} \rightarrow \infty$ as $X \rightarrow \infty$, this process extends the function meromorphically to the entire complex plane,
provided that the first factor is meromorphic.

\begin{thm} Let $L(M,s) = \prod_p (P_p( p^{-s}))^{-1}$ 
 be the $L$-function of a pure motive  of degree $d$ and weight $w$ over $\QQ$. Assume for simplicity that for primes $p$ of bad reduction, the inverse roots of $P_p(T)$ of the primes of bad reduction have absolute value at most $p^{w/2}$, and that $p=3$ is a prime of good reduction. Let $|\alpha|<1$.
 Then the abscissa of convergence $\sigma_c$ and the abscissa of absolute convergence $\sigma_a$  of the twisted $L$-function $L(M,s,\psi)$ are
 $$ \sigma_c = \sigma_a =  \frac{3}{\log(3)} \log(|\alpha|) + \frac{w}{2} . $$
The function $L(M,s,\psi)$  extends to a meromorphic function on $\CC$ that has no zeroes. Its poles correspond to the
zeroes of the local factors $P_p( \alpha^p p^{-s})$. 
 Each prime $p$ of good reduction yields a family of poles whose real part is
 $$ \frac{p}{\log(p)} \log |\alpha| + \frac{w}{2} $$
and whose imaginary parts are $\frac{2\pi }{\log(p)}$-periodic .  
\label{merom}
 \end{thm}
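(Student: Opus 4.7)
The plan breaks into four steps: (i) pin down $\sigma_a$ exactly, (ii) obtain the meromorphic continuation via the head/tail splitting sketched just before the theorem, (iii) analyse zeros and poles, and (iv) deduce $\sigma_c=\sigma_a$ from boundary poles.

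For $\sigma_a$ I would exploit multiplicativity of $c(n)$. Writing $P_p(T)=\prod_{i=1}^d(1-\beta_{p,i}T)$ with $|\beta_{p,i}|\le p^{w/2}$ (equality for primes of good reduction), and using $S(p^k)=kp$, the Euler product yields
\[
\sum_{n=1}^{\infty} |\alpha|^{S(n)}|c(n)|\,n^{-\sigma}
= \prod_p \sum_{k\ge 0} |c(p^k)|\bigl(|\alpha|^p p^{-\sigma}\bigr)^k .
\]
Each local sum is a power series in $z_p=|\alpha|^p p^{-\sigma}$ whose radius of convergence is $p^{-w/2}$, so convergence is equivalent to $\sigma > w/2+\frac{p}{\log p}\log|\alpha|$. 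Since $\log|\alpha|<0$ and $x/\log x$ attains its prime-minimum at $p=3$, the binding constraint is $\sigma > w/2+\frac{3}{\log 3}\log|\alpha|$; in this range the outer product also converges, because $\log$ of each local factor is dominated by $d\,p^{w/2-\sigma}|\alpha|^p$, which is summable over primes thanks to the exponential decay of $|\alpha|^p$. Conversely, for $\sigma$ below the threshold the local series at $p=3$ already diverges, and positivity lets me restrict the full sum to the subseries indexed by $n=3^k$ to conclude divergence of the whole series. Hence $\sigma_a=w/2+\frac{3}{\log 3}\log|\alpha|$.

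For the meromorphic continuation I would execute the splitting sketched after Proposition~\ref{conv}. Fix $X\ge 3$ and write
\[
L(M,s,\psi) = \Bigl(\prod_{p<X} P_p(\alpha^p p^{-s})^{-1}\Bigr)\Bigl(\prod_{p\ge X} P_p(\alpha^p p^{-s})^{-1}\Bigr).
\]
The head is a finite product of rational functions in $p^{-s}$, hence meromorphic on $\CC$. Proposition~\ref{f}(d) lets me rerun the estimate above restricted to primes $\ge X$, so the tail converges absolutely on $\Re(s)>w/2+\frac{X}{\log X}\log|\alpha|$. Letting $X\to\infty$ these half-planes exhaust $\CC$, and the various head/tail decompositions agree on overlaps, so they glue to a meromorphic function on $\CC$.

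The remaining assertions then drop out. On the region of absolute convergence of the tail, its logarithm converges absolutely, so the tail is non-vanishing there; thus the zeros and poles of $L(M,s,\psi)$ in that region coincide with those of the finite head, a product of factors $(1-\beta_{p,i}\alpha^p p^{-s})^{-1}$. These factors are zero-free, giving the absence of zeros. A pole occurs whenever $\beta_{p,i}\alpha^p p^{-s}=1$; taking absolute values and arguments gives real part $\frac{\log|\beta_{p,i}|}{\log p}+\frac{p\log|\alpha|}{\log p}$, equal to $\frac{w}{2}+\frac{p}{\log p}\log|\alpha|$ for primes of good reduction, with imaginary parts $\frac{2\pi}{\log p}$-periodic. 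The existence of such poles at $p=3$ on the line $\Re(s)=w/2+\frac{3}{\log 3}\log|\alpha|$ forces $\sigma_c\ge w/2+\frac{3}{\log 3}\log|\alpha|$; combined with the trivial $\sigma_c\le\sigma_a$ this yields $\sigma_c=\sigma_a$. The main subtlety is the sharp lower bound in step (i): one has to recognise that the ``$+1$'' of Proposition~\ref{conv} is wasteful here and that the genuine obstruction to convergence is supplied by the single Euler factor at $p=3$, which is exactly why the good-reduction hypothesis at $3$ cannot be dropped.
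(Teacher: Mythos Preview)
Your proposal is correct and follows essentially the same route as the paper: analyse the Euler factors to locate the half-plane of absolute convergence, use the head/tail splitting of equation~(\ref{second}) together with Proposition~\ref{f}(d) for the meromorphic continuation, read off the poles from the linear factors $1-\beta_{p,i}\alpha^p p^{-s}$, and invoke the pole at $p=3$ on the boundary line to pin down $\sigma_c$. Your write-up is in fact a bit more explicit than the paper's in separating the roles of $\sigma_c$ and $\sigma_a$ and in spelling out why the tail is zero-free; the additional direct divergence argument via the subseries $n=3^k$ is harmless but redundant, since the pole argument already forces $\sigma_c\ge\sigma_a$ and hence equality.
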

 
 \begin{proof} Let $p$ be a prime of good reduction. The local polynomial $P_p( p^{-s})$ factors as $\prod_{i=1}^d (1-c_{p,i}\, T)$,
 where the inverse roots have absolute value $|c_{p,i}| = p^{w/2}$. The twisted factor is 
 $P_p(\psi (p) p^{-s}) = \prod_{i=1}^d (1- \alpha^{p} c_{p,i} \, p^{-s})$. 
The Euler product converges absolutely if and only if the series $\sum_{p} \sum_{i=1}^d  \alpha^{p} c_{p,i} \: p^{-s}$ converges absolutely and all terms $\alpha^{p} c_{p,i} \: p^{-s}$  are $\neq 1$.
Let $\sigma= \Re(s)$. The absolute values of the terms are
$$ | \alpha^{p} c_{p,i} \: p^{-s} | = e^{p \log |\alpha| + \frac{w}{2} \log(p)- \sigma \log(p)} = e^{p\left(\log |\alpha| + (\frac{w}{2}- \sigma) \frac{\log(p)}{p}\right)} .$$
If $\sigma > \frac{w}{2} + \frac{3}{\log(3)} \log|\alpha|$ then we have $\log |\alpha| + (\frac{w}{2} - \sigma) \frac{\log(p)}{p}<0$. Since $\frac{\log(p)}{p}$ converges to $0$, the series $\sum_{p} | \alpha^{p} c_{i,p}\, p^{-s} |$ 
is dominated by a convergent geometric series. Hence the infinite product converges absolutely in the half-plane $\sigma >  \frac{3}{\log(3)} \log|\alpha| + \frac{w}{2}$.

The assumption on the primes of bad reduction ensures that these primes cannot contribute any poles having a larger real part. Furthermore, since we assumed that $p=3$ is a prime of good reduction, poles on the line $\sigma = \frac{3}{\log(3)} \log|\alpha| + \frac{w}{2}$ exist, so it is the true abscissa. 
 The meromorphic extension to $\CC$ follows from the factorisation method in equation (\ref{second}). To this end, we note that in our situation the first factor of  (\ref{second}) is a finite product of the meromorphic functions $(P_p( p^{-s}))^{-1}$.
 Finally, the poles of $L(M,s,\psi)$ are the solutions to
 $\alpha^{p} c_{p,i}\: p^{-s} = 1$ which yields the stated result.
  \end{proof}
 
\begin{figure}[h]
\includegraphics[width=8cm]{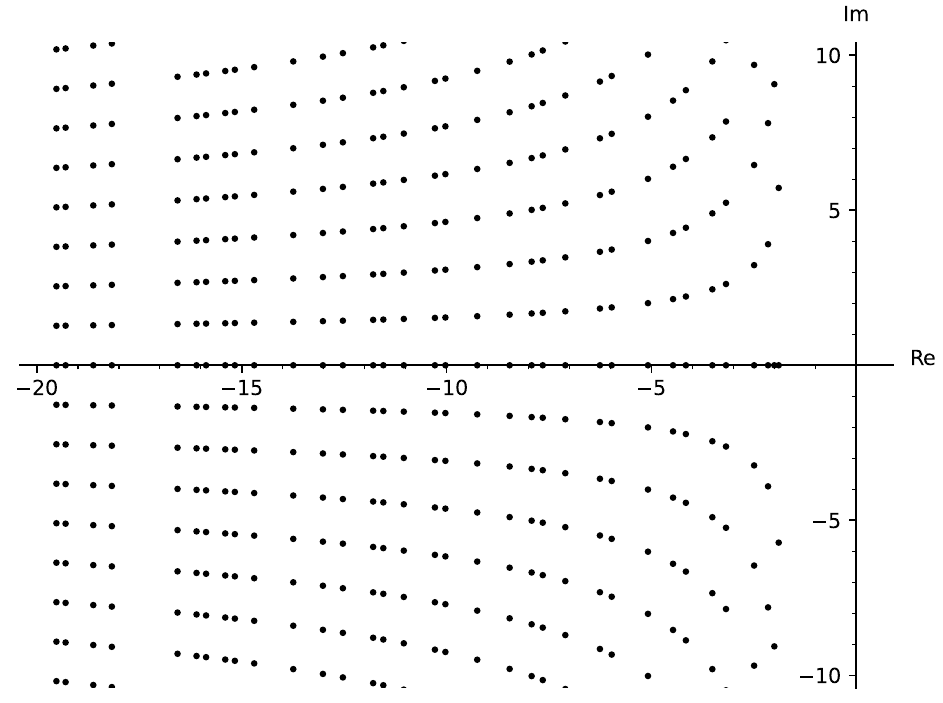}
\caption{Poles of the $\psi$-twisted Riemann zeta function for $\alpha=\frac{1}{2}$. The largest real parts of poles are at $\frac{-3\log(2)}{\log(3)} \approx -1.89$. }
\label{pospoles0}
\end{figure}

 It is worth noting that  the abscissa $\sigma_a$ converges to  $\frac{w}{2}$ as $|\alpha| \rightarrow 1-$, and not to $1+  \frac{w}{2}$ as one might expect.

\begin{rem} The proof of Theorem \ref{merom} shows that the {\em Euler product} (over all $p$) defines a meromorphic function on the complex plane. However, 
the associated {\em Dirichlet series} converges only for $\Re(s)=\sigma > \frac{3}{\log(3)} \log|\alpha| + \frac{w}{2}$. 
\end{rem}

For $\Re(s)>\sigma_a^0 = 1+ \frac{w}{2}$ the $\psi$-twisted $L$-function $L(M,s,\psi)$ converges to $L(M,s)$ as $\alpha$ approaches $1$ from the left (see Proposition \ref{limit}). An interesting question is whether this result can be extend to the critical strip and in particular to the {\em critical line} $\Re(s) = \frac{w+1}{2}$. In contrast to $L(M,s)$, the $\psi$-twisted Euler product converges absolutely for all $|\alpha|<1$  on the half-plane $\Re(s) > \frac{w}{2}$. We want to relate the analytic extension of $L(M,s)$ and $L(M,s,\psi)$ on the critical line.
Does $L(M,s,\psi)$ tend to $L(M,s)$ as $\alpha \rightarrow 1-$ ?

We can re-order $L(M,s,\psi)$ with respect to powers of $\alpha$ and obtain
$$ L(M,s,\psi) = \sum^{\infty}_{m=0}   \left( \sum_{n \in S^{-1}(\{m\})}^{\phantom{2}}   c(n) n^{-s} \right) \alpha^{m} .$$
In order to apply Abel's theorem, the convergence of $ \sum^{\infty}_{m=0}   \left( \sum_{n \in S^{-1}(\{m\})}^{\phantom{2}}   c(n) n^{-s} \right)$ is required.
Due to the re-ordering, however,  this is not obvious if the Dirichlet series converges conditionally. Furthermore, the limit value is not necessarily $L(M,s)$.

We use an alternative approach based on the Euler product expansion. However, simply taking the limit $\alpha \rightarrow 1-$ inside the Euler product of $L(M,s)$ does not work since the product does not converge absolutely on the critical line. Furthermore, Abel's theorem cannot be applied directly to infinite products, and Hardy \cite{hardy} has even given a counterexample for the case of conditional convergence.
As explained in ibid., we need an additional assumption on the higher-order powers of the coefficients, which is satisfied in our situation.

\begin{prop} Let $\prod_p P_p(p^{-s})^{-1} = \prod_p \prod_{i=1}^d (1- c_{p,i} \, p^{-s})^{-1} $ be the Euler product expansion of $L(M,s)$, and let s be a point on the critical line, i.e., $\Re(s) = 1+ \frac{w}{2}$. Suppose that the Euler product at $s$ converges conditionally to a non-zero limit value.  Then we have 
$$\lim_{\alpha \rightarrow 1-} L(M,s,\psi) = \prod_p P_p(p^{-s})^{-1}. $$
\label{limitcrit}
\end{prop}
 
 \begin{proof} Let $P_p(p^{-s})^{-1} = \prod_{i=1}^d (1- c_{p,i} \, p^{-s})^{-1}$ be the Euler factor at $p$.
 We prove the claim by taking the logarithm of both sides. The left-hand side is absolutely convergent and the logarithm is 
 \begin{equation}
  \sum_p \sum_{i=1}^d - \log(1- \alpha^p c_{p,i} \, p^{-s}) = \sum_{i=1}^d \sum_p \left( c_{p,i} \,  \alpha^p p^{-s} + \sum_{k=2}^{\infty} \alpha^{2p} c_{p,i}^2 \, p^{-2s} \right)
  \label{lhs}
  \end{equation}
 The logarithm of the right-hand side is 
 \begin{equation} \sum_p \sum_{i=1}^d - \log(1- c_{p,i} \, p^{-s}) = \sum_{i=1}^d \sum_p \left( c_{p,i} \,  p^{-s} + \sum_{k=2}^{\infty} c_{p,i}^2 \, p^{-2s} \right) .
 \label{rhs}
 \end{equation}
 The series $\sum_i \sum_p \sum_{k \geq 2}$ over the higher-order terms of (\ref{lhs})  and (\ref{rhs})  converge absolutely and are uniformly bounded for all $|\alpha|<1$ since $| c_{p,i}^2 \, p^{-2s} | = p^{w-(2+w)}=p^{-2}$. The  higher-order terms of (\ref{lhs}) converge to the higher-order terms of (\ref{rhs}).
 Since the series  (\ref{rhs}) converges conditionally and the higher-order terms converge, the series $\sum_i \sum_p c_{p,i} \, p^{-s} $ over the linear terms also converges. Now let $\alpha$ approach $1$ from the left and apply Abel's theorem:
 $$ \lim_{\alpha \rightarrow 1-}      \sum_{i=1}^d \sum_p \alpha^p c_{p,i} \, p^{-s} = \sum_{i=1}^d \sum_p c_{p,i} \, p^{-s} $$
 Obviously,  we can use Abel's theorem in this situation by defining a series which is $0$ if the index is not a prime number. Note that $\alpha$ must approach $1$ from within a Stolz sector, e.g., along the real axis from the left. By adding the limits of the linear term and the higher-order terms we obtain the desired result.
 \end{proof}
 
 \begin{rem} The situation where the Euler product diverges to $0$ can be addressed in a similar way. In this case, the twisted $L$-function at $s=1$ tends to $0$ as $\alpha \rightarrow 1-$.
 \end{rem}
 Note that the convergence of the Euler product on the critical line is a challenging  topic (see \cite{goldfeld, kconrad}). The limit value can also deviate from $L(M,s)$ by an unexpected factor of $\sqrt{2}$, for example for the $L$-function of an elliptic curve $E$ at $s=1$. In this case, we have shown that if the product $\prod_p \frac{p}{E_{\text{ns}}(\Bf_p)}$ (i.e., the Euler product at $s=1$) converges to $C \neq 0$, then 
 $L(E,1,\psi)$ also converges to $C$ as $\alpha \rightarrow 1-$. Goldfeld has proved in \cite{goldfeld} that $C=\frac{L(E,1)}{\sqrt{2}}$. The original Birch and Swinnerton-Dyer conjecture says, in part, that the Euler product  converges conditionally at $s=1$ if and only if $E(\QQ)$ is finite.

\begin{rem}
Can we hope to find a functional equation for $\psi$-twisted $L$-functions? This seems very unlikely given the number of poles of $L(M,s,\psi)$  in the left half-plane (see Theorem \ref{merom} and Figure \ref{pospoles0}), which cannot be compensated for by a finite number of $\Gamma$-factors. 

\end{rem}

\subsection{$\psi$-twists of Dirichlet $L$-functions}
For  a Dirichlet character $\chi$, the motive has weight $w=0$ and degree $d=1$. Theorem \ref{merom} specializes to:

\begin{cor}
Let $\chi$ be a Dirichlet character  and  $|\alpha| \leq 1$. Let $\sigma_c$ and $\sigma_a$ be the abscissas of convergence of the $\psi$-twisted Dirichlet $L$-function 
$  L(s, \psi\chi)$. If $\chi(3)\neq 0$ then
$$\sigma_c = \sigma_a = \frac{3}{\log(3)} \log(|\alpha|) < 0 .$$
If $\chi(3)=0$ then the abscissa is determined by the smallest prime $q$ not dividing the conductor of $\chi$, i.e., 
$\sigma_c = \sigma_a = \frac{q}{\log(q)} \log(|\alpha|)$. The function $ L(s,\psi \chi) $ extends to a meromorphic function on $\CC$ without zeroes
and with simple poles at
$$ \frac{p \log(\alpha)}{\log(p)} + \frac{\log(\chi(p))}{\log(p)}  + \frac{ k }{\log(p)} \, 2\pi i  , $$
where $p$ is a prime with $\chi(p) \neq 0$ and $k \in \ZZ$. 
\label{mero}
\end{cor}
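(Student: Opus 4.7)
The plan is to apply Theorem \ref{merom} to the degree-one weight-zero motive attached to $\chi$, for which $P_p(T)=1-\chi(p)T$ at primes $p$ not dividing the conductor of $\chi$ and $P_p(T)=1$ otherwise. The $\psi$-twisted local factor is therefore $(1-\alpha^p\chi(p)p^{-s})^{-1}$, which is identically $1$ at every prime dividing the conductor and, in the exceptional subcase, at $p=3$.

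When $\chi(3)\neq 0$, the prime $3$ is of good reduction in the sense of Theorem \ref{merom}, whose hypothesis on bad primes is vacuous here since either $P_p(T)=1$ (at conductor primes) or $|\chi(p)|=1=p^{w/2}$. Specializing the theorem with $w=0$ then yields $\sigma_c=\sigma_a=\tfrac{3}{\log 3}\log|\alpha|$, the meromorphic continuation, and the pole description.

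When $\chi(3)=0$, I would rerun the splitting argument leading to equation (\ref{second}) with $X$ chosen just above the smallest prime $q$ with $\chi(q)\neq 0$. The finite factor then only records the nontrivial local polynomials at primes $p<X$ (which for $p<q$ are $P_p(T)\equiv 1$), while the residual Dirichlet series is supported on integers $n$ whose prime divisors all exceed $q$. For such $n$, Proposition \ref{f}(d) gives $|\alpha|^{S(n)}\leq n^{\frac{q}{\log q}\log|\alpha|}$, and combined with $|\chi(n)|\leq 1$ this forces absolute convergence on $\Re(s)>\tfrac{q}{\log q}\log|\alpha|$. Because the local factor at $q$ contributes genuine poles on that line, this is the true abscissa, and meromorphic extension to all of $\CC$ follows by letting $X\to\infty$ exactly as in the proof of Theorem \ref{merom}.

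The pole formula I would obtain by solving $\alpha^p\chi(p)p^{-s}=1$ directly: taking logarithms and dividing by $\log p$ gives the stated expression, with $k\in\ZZ$ parametrising the $\tfrac{2\pi}{\log p}$-periodic imaginary part; simplicity of each such pole follows from the linearity of $1-\alpha^p\chi(p)p^{-s}$ in $p^{-s}$. Absence of zeroes comes by iterating the splitting on any bounded region of $\CC$: a sufficiently large $X$ realises $L(s,\psi\chi)$ as a finite product of pole-only factors times a residual Euler product of nonzero terms that converges absolutely there. The main obstacle is the bookkeeping in the $\chi(3)=0$ subcase, where one must verify that no unexpected convergence issue or pole arises from primes strictly between $3$ and $q$; this reduces to the observation that each such local factor is identically one.
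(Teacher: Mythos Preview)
Your proposal is correct and follows the paper's own route: the paper presents this corollary with no explicit proof, simply introducing it as the specialization of Theorem~\ref{merom} to the degree-one, weight-zero motive attached to $\chi$. Your write-up is in fact considerably more thorough than the paper's, spelling out the $\chi(3)=0$ subcase, the pole computation, simplicity, and the absence of zeroes---all of which the paper leaves implicit.

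One small slip worth tightening: when you invoke Proposition~\ref{f}(d) for the residual series supported on integers with all prime divisors exceeding $q$, the bound you obtain is governed by the next prime after $q$, not by $q$ itself. This does not harm the argument---it still shows the residual factor converges absolutely strictly to the left of the line $\Re(s)=\tfrac{q}{\log q}\log|\alpha|$, so that line is determined solely by the pole of the local factor at $q$---but the inequality as you stated it is not quite what Proposition~\ref{f}(d) delivers. Also note that Proposition~\ref{f}(d) requires $X\ge 3$; the case $q=2$ (i.e.\ $\chi(3)=0$ but $\chi(2)\neq 0$) needs the observation that among primes $p\neq 3$ the minimum of $p/\log p$ occurs at $p=2$, which follows from the same monotonicity argument but is not literally an instance of~\ref{f}(d).
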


\begin{figure}[h]
\includegraphics[width=8cm]{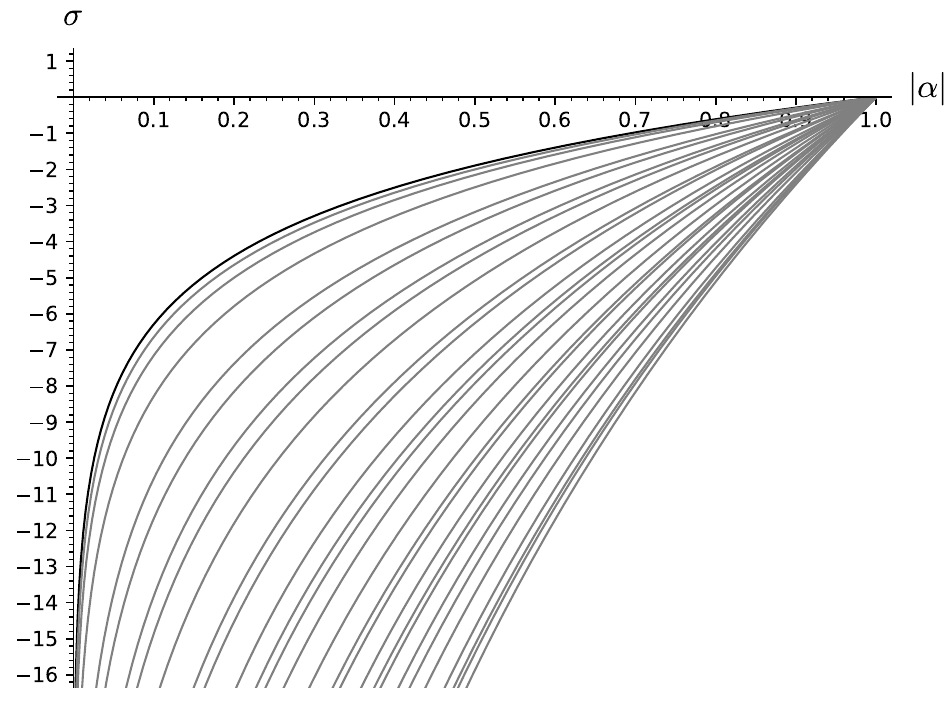}
\caption{Real part $\sigma$ of the largest $30$ poles of the $\psi$-twisted Riemann zeta function for each $0<\alpha<1$. The largest $\sigma$ (i.e., the upper curve) is associated to $p=3$, the next to $p=2$, then $p=5$ etc. The upper curve also gives the abscissa of absolute convergence $\sigma_a$.}
\label{pospoles}
\end{figure}

The following proposition shows that we can represent $L(s,\psi \chi)$ as a Mellin-transform on the half-plane $\Re(s)>0$.

\begin{prop} Let $\chi$ be a Dirichlet character,  $\alpha \in \CC$  with $|\alpha| < 1$.
Define $G(x)= \sum_{n=1}^{\infty} \alpha^{S(n)} \chi(n) e^{-nx} \text{ for all } x \geq 0 .$ Then
$$ L(s,\psi \chi) = \frac{1}{\Gamma(s)} \int_0^{\infty} G(x) x^{s-1} dx \ \ \text{ for }\ \Re(s)>0 . $$
\end{prop}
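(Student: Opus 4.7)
The plan is to use the standard Mellin transform identity $\Gamma(s)/n^s = \int_0^\infty e^{-nx} x^{s-1}\,dx$ (valid for $\Re(s)>0$ after substituting $x \mapsto nx$) and then interchange the sum defining $G(x)$ with the integral. Concretely, I would compute
\[
\int_0^\infty G(x) x^{s-1}\,dx \;=\; \int_0^\infty \sum_{n=1}^\infty \alpha^{S(n)}\chi(n) e^{-nx} x^{s-1}\,dx \;=\; \sum_{n=1}^\infty \alpha^{S(n)}\chi(n) \int_0^\infty e^{-nx} x^{s-1}\,dx \;=\; \Gamma(s)\, L(s,\psi\chi),
\]
and then divide by $\Gamma(s)$. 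The only real content is justifying the interchange, for which Fubini--Tonelli requires
\[
\int_0^\infty \sum_{n=1}^\infty |\alpha|^{S(n)}\, e^{-nx}\, x^{\sigma-1}\,dx \;<\; \infty, \qquad \sigma = \Re(s).
\]

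The main (and essentially only) obstacle is that $\sigma>0$ must already lie to the right of the abscissa of absolute convergence of the absolute-value series $\sum_n |\alpha|^{S(n)}\chi(n)n^{-s}$. This is precisely where the $\psi$-twist pays off: by Corollary \ref{mero}, the abscissa of absolute convergence of $L(s,\psi\chi)$ is either $\frac{3}{\log(3)}\log|\alpha|$ or $\frac{q}{\log(q)}\log|\alpha|$ for some prime $q\geq 3$, and both quantities are strictly negative when $|\alpha|<1$. Hence for every $\sigma>0$ the majorant series $\sum_n |\alpha|^{S(n)} n^{-\sigma}$ converges, and (running Tonelli in reverse)
\[
\int_0^\infty \sum_n |\alpha|^{S(n)} e^{-nx} x^{\sigma-1}\,dx \;=\; \Gamma(\sigma)\sum_n |\alpha|^{S(n)} n^{-\sigma} \;<\; \infty.
\]

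With the interchange secured, applying the Gamma integral term-by-term and dividing by $\Gamma(s)$ produces the claimed identity on the half-plane $\Re(s)>0$. As a byproduct, the same estimate shows that $G(x)$ is in fact continuous and bounded on $[0,\infty)$ (the bound $\sum_n |\alpha|^{S(n)}<\infty$ is the special case $\sigma=0$), decays like $e^{-x}$ at infinity, and thus has a Mellin transform that is holomorphic in $\Re(s)>0$; this is consistent with the absence of poles of $L(s,\psi\chi)$ in this right half-plane predicted by Corollary \ref{mero}.
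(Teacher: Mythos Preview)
Your proposal is correct and follows exactly the standard Mellin-transform approach the paper has in mind; the paper's own proof simply asserts that the construction is standard and that the negative abscissa of absolute convergence (Corollary~\ref{mero}) guarantees convergence of the Mellin integral for $\Re(s)>0$, whereas you spell out the Fubini--Tonelli justification in detail. (One tiny imprecision: the prime $q$ in the $\chi(3)=0$ case need not satisfy $q\geq 3$, but this is irrelevant since $\frac{q}{\log q}\log|\alpha|<0$ regardless.)
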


\begin{proof} This is a standard construction that works for any Dirichlet series. Since the abscissa  of convergence  of $L(s,\psi \chi)$ is negative (Theorem \ref{mero}), the Mellin transform converges for $\Re(s)>0$. Furthermore, $G(x)$ is defined in $x=0$.
\end{proof}

\begin{rem} It would be very useful if $G(x)$ had an asymptotic expansion for $x \rightarrow 0+$, as the meromorphic extension to $\CC$ and special values of $L(s,\psi \chi)$ at the negative integers would follow from standard arguments (see for example \cite{zagier}). However, the $k$-th Taylor coefficient of $G(x)$ at $x=0$ would be $\frac{(-1)^k}{k!} \sum_{n=1}^{\infty} \psi(n) \chi(n) n^k$, and this series converges (to $L(-k,\psi \chi)$)  only if $k<-\sigma_c$. 

\end{rem}

\subsection{$\psi$-twists of $L$-functions of Modular Forms}

Let $f$ be a normalized eigenform of weight $k$, level $N$ and character $\chi_f$. Its $L$-function is $L(s,f)= \sum_{n=1}^{\infty} c(n)  n^{-s}$, and the local factor
at a prime $p \nmid N$ is  $(1- c(p) p^{-s} + \chi(p) p^{k-1} p^{-2s})^{-1}$.

\begin{thm} Let $|\alpha|<1$.
The $\psi$-twisted $L$-function  $L(s,f,\psi)$
has an Euler product
$$ L(s,f,\psi) = \prod_{p \nmid N} ( 1 - \alpha^p c(p) p^{-s} + \alpha^{2p} \chi(p) p^{k-1} p^{-2s})^{-1} \cdot \prod_{p \mid N} ( 1 - \alpha^p c(p) p^{-s} )^{-1} .$$
If $f$ is a newform, the quadratic polynomial in the denominator for $p \nmid N$ splits as 
$(1-\alpha^p c_1(p) p^{-s}) (1-\alpha^p c_2(p) p^{-s})$, where $|c_1(p)|=|c_2(p)| = p^{(k-1)/2}$.
If $3 \nmid N$, the abscissa of absolute convergence is
$$\sigma_a = \frac{3}{\log(3)} \log(|\alpha|) + \frac{k-1}{2} .$$ 
Furthermore, $L(s,f,\psi)$ has a meromorphic continuation to $\CC$ with no zeroes.
\label{modthm}
\end{thm}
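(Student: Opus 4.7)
The plan is to deduce Theorem~\ref{modthm} from Theorem~\ref{merom} applied to the modular motive with weight $w = k-1$ and degree $d = 2$, after handling the ingredients specific to modular forms.

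First I would establish the Euler product. Since $c(n)$ is multiplicative and $\alpha^{S(n)}$ is completely multiplicative, the twisted Dirichlet series factors as $\prod_p \sum_{m\ge 0} \alpha^{S(p^m)} c(p^m) p^{-ms}$. Using $S(p^m) = mp$ from Proposition~\ref{f}, each local sum becomes the original local factor with $p^{-s}$ replaced by $\alpha^p p^{-s}$; this immediately yields the stated product, splitting the $p \nmid N$ and $p \mid N$ factors. For a newform with $p \nmid N$, Deligne's bound (Ramanujan--Petersson) gives the factorisation $1 - c(p)T + \chi(p) p^{k-1} T^2 = (1 - c_1(p)T)(1 - c_2(p)T)$ with $|c_1(p)| = |c_2(p)| = p^{(k-1)/2}$; substituting $T = \alpha^p p^{-s}$ gives the desired splitting of the twisted local factor.

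Next I would verify the hypotheses of Theorem~\ref{merom}. The primes of good reduction are exactly those with $p \nmid N$, and for newforms the inverse roots at such primes have absolute value $p^{(k-1)/2} = p^{w/2}$ as just noted. For the primes $p \mid N$ of bad reduction the local factor is linear, $1 - c(p)T$, and it is known (Atkin--Lehner, Li) that $|c(p)| \in \{0,\, p^{(k-2)/2},\, p^{(k-1)/2}\}$, hence bounded by $p^{w/2}$ as required by the simplifying assumption in Theorem~\ref{merom}. The assumption $3 \nmid N$ ensures that $p=3$ is a prime of good reduction, which is precisely the remaining hypothesis.

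With the hypotheses checked, Theorem~\ref{merom} immediately delivers the abscissa of absolute convergence $\sigma_a = \frac{3}{\log(3)} \log|\alpha| + \frac{k-1}{2}$, the meromorphic continuation of the Euler product to all of $\CC$ via the factorisation (\ref{second}), and the absence of zeroes. The only real work is bookkeeping: recalling the correct Deligne--Ramanujan bounds for $p \mid N$ and confirming that the general argument of Theorem~\ref{merom} applies verbatim once $p=3$ is a good prime. Thus the main obstacle is simply the invocation of the known bounds at ramified primes; the analytic content is entirely absorbed by Theorem~\ref{merom}.
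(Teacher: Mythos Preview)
Your proposal is correct and follows essentially the same approach as the paper: invoke Deligne's Ramanujan--Petersson bound at the good primes, the known bounds $|c(p)| \in \{0,\, p^{(k-2)/2},\, p^{(k-1)/2}\}$ at the bad primes (the paper cites Deligne--Serre rather than Atkin--Lehner/Li, but the content is the same), and then apply Theorem~\ref{merom} with $w=k-1$. Your write-up is in fact more explicit than the paper's about deriving the Euler product and checking the hypotheses, but the logical structure is identical.
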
 
\begin{proof} 
The absolute values $|c_1(p)|=|c_2(p)| = p^{(k-1)/2}$ are known for newforms by a Theorem of Deligne (\cite{weil} 8.2).
If $p \mid N$ then there are three possibilities: $c(p)=0$, $|c(p)| = p^{(k-1)/2}$ or $|c(p)| = p^{(k/2) - 1}$ (see \cite{deligne-serre}).
Then the assertion follows from Theorem \ref{merom}.
 \end{proof}

\begin{exm} For an elliptic curve $E$ be an elliptic curve over $\QQ$,
the associated newform $f$ has weight $2$ with Fourier coefficients $c(p) = a_p(E) = p+1-|\tilde{E}(\Bf_p)|$ for primes $p$ of good reduction. For primes of bad reduction, the coefficients are  (depending on the type of reduction) $0$, $1$ or $-1$.
The Euler factors of the Hasse-Weil $L$-function are 
$$ ( 1- c(p) p^{-s} + \chi(p)p^{1-2s} )^{-1}, $$
where $\chi$ is the trivial character modulo $N$. 
The poles of $L(s,f,\psi)$ have real parts 
$$ \frac{p}{\log(p)} \log |\alpha| + \frac{1}{2} $$
for primes $p$ of good reduction. The arithmetic information (the values $a_p(E)$) is encoded in the imaginary parts of the poles.
By looking at a larger number of elliptic curves, we have found a statistical relationship between the values of $L(s,f,\psi)$ around $s=1$ and the rank of the curve, which can be explained by the distribution of the values $a_p(E)$. 
\end{exm}

\subsection{Estimates and Asymptotic Behaviour} 

The magnitude of the twisted $L$-function can be bounded using its Euler product.

\begin{lem} Let $(u_i)_{i \in \NN}$ be a complex sequence satisfying $u_i \neq 1$ for all $i \in \NN$.
The infinite products 
$\prod_{i=1}^{\infty} (1-u_i)$ and $\prod_{i=1}^{\infty} (1-u_i)^{-1}$ 
converge absolutely to a nonzero limit if and only if the series $\sum_{i=1}^{\infty} u_i$ converges absolutely. In this case, we have
\begin{enumerate}
\item $\displaystyle \exp \left( - \sum_{i=1}^{\infty} \left|  \frac{u_i}{1-u_i} \right| \right) \leq  \prod_{i=1}^{\infty} | 1-u_i | \leq \exp \left( \sum_{i=1}^{\infty} |u_i| \right)$,
\item $\displaystyle \exp \left(- \sum_{i=1}^{\infty} |u_i| \right) \leq \prod_{i=1}^{\infty} | 1-u_i|^{-1}   \leq \exp \left( \sum_{i=1}^{\infty} \left| \frac{u_i}{1-u_i} \right| \right)$.

\end{enumerate}
\label{propest}
\end{lem}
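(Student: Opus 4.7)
The plan is to split the lemma into two parts: first the equivalence between absolute convergence of the products and absolute convergence of $\sum u_i$, and then the four inequalities. The equivalence is a classical fact about infinite products, which I would prove by taking principal logarithms. Once $|u_i| < 1/2$ (which holds eventually under either hypothesis), the expansion $\log(1-u_i) = -u_i - u_i^2/2 - \cdots$ gives $|\log(1-u_i)|$ between $|u_i|/2$ and $2|u_i|$, so the series $\sum \log(1-u_i)$ converges absolutely iff $\sum u_i$ does, and the assumption $u_i \neq 1$ for all $i$ ensures that no partial product vanishes. Note also that if $\sum|u_i|<\infty$ then $u_i \to 0$, hence $|1-u_i|\to 1$, and $|u_i/(1-u_i)| \leq 2|u_i|$ eventually, so the series appearing on the right-hand sides of (1) and (2) are finite as well.

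For the four inequalities I would use the single elementary estimate $1+x \leq e^x$ for $x \geq 0$ twice. For the upper bound in (1), from
\[ |1 - u_i| \leq 1 + |u_i| \leq \exp(|u_i|) \]
multiplying over $i$ yields $\prod_i |1-u_i| \leq \exp\!\bigl(\sum_i |u_i|\bigr)$. For the upper bound in (2), I would rewrite $(1-u_i)^{-1} = 1 + u_i/(1-u_i)$, obtain
\[ |1-u_i|^{-1} \leq 1 + \left|\frac{u_i}{1-u_i}\right| \leq \exp\!\left(\left|\frac{u_i}{1-u_i}\right|\right), \]
and again multiply. The two lower bounds then follow for free by taking reciprocals of the two upper bounds.

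I do not foresee any serious obstacle. The main thing to watch is the logical order: the convergence claim must be settled first so that the infinite products on the left-hand sides genuinely converge to finite nonzero limits, after which the pointwise bounds on partial products pass to the limit. A minor subtlety is that the right-hand series in (1) and (2) involve $|u_i/(1-u_i)|$ rather than $|u_i|$, but once $u_i \to 0$ the comparison noted above makes this harmless. Everything beyond that is a direct application of the exponential bound $1+x \leq e^x$, so no further machinery is needed.
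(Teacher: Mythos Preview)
Your proposal is correct and follows essentially the same approach as the paper: the paper dismisses the convergence equivalence as ``well known'' and then proves the upper bounds in (1) and (2) via $|1-u_i|\le 1+|u_i|\le e^{|u_i|}$ and $|1-u_i|^{-1}=|1+u_i/(1-u_i)|\le e^{|u_i/(1-u_i)|}$, obtaining the lower bounds by reciprocation. Your write-up simply fills in more detail on the convergence part than the paper does.
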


\begin{proof} The first part on convergence is well known. One has $|1-u_i| \leq 1 + |u_i| \leq \exp(|u_i|)$. Similarly,
$|1-u_i|^{-1} = | 1 + \frac{u_i}{1-u_i} |  \leq \exp(| \frac{u_i}{1-u_i} | )$. This yields the right inequalities (1) and (2). By taking the reciprocal values, one obtains the left inequalities. This completes the proof.
\end{proof}

\begin{prop} Let $M$ be a pure motive of degree $d$ and weight $w$.
Let $L_S(M,s,\psi)$ be the twisted $L$-function with the Euler factors at bad primes removed.
For $\Re(s) > \sigma_a$, let 
$u_p=p^{\frac{p}{\log(p)} \log |\alpha| + \frac{w}{2} - \Re(s) } $. Then
\begin{equation} \exp\left( - \sum_p  d  \, u_p \right) \leq | L_S(M,s,\psi) | \leq \exp \left( \sum_p d\, \frac{u_p}{1-u_p} \right) .
\label{estformula}
\end{equation}

\label{est}
\end{prop}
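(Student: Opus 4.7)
The plan is to apply Lemma \ref{propest}(2) directly to the Euler product of $L_S(M,s,\psi)$. By the local factorisation used in the proof of Theorem \ref{merom}, one has
$$L_S(M,s,\psi) = \prod_{p \text{ good}} \prod_{i=1}^{d} (1-u_{p,i})^{-1}, \qquad u_{p,i} := \alpha^{p}\, c_{p,i}\, p^{-s},$$
where the $c_{p,i}$ are the inverse roots of $P_p(T)$ at primes of good reduction. The motivic normalisation $|c_{p,i}| = p^{w/2}$, together with the identity $|\alpha|^{p} = p^{p \log|\alpha|/\log p}$, gives
$$|u_{p,i}| = p^{\tfrac{p}{\log p}\log|\alpha| + \tfrac{w}{2} - \Re(s)} = u_p,$$
independent of $i$, exactly matching the quantity $u_p$ defined in the statement.

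Next I would verify that the hypotheses of Lemma \ref{propest} are met for $\Re(s) > \sigma_a$, i.e.\ that $u_{p,i} \neq 1$ and that $\sum_{p,i}|u_{p,i}|$ converges absolutely. Proposition \ref{f}(c) gives $\tfrac{p}{\log p} \geq \tfrac{3}{\log 3}$ for $p \geq 3$, and the direct numerical check $\tfrac{2}{\log 2} > \tfrac{3}{\log 3}$ covers the remaining prime. Combined with $\log|\alpha| < 0$, this forces the exponent $\tfrac{p}{\log p}\log|\alpha| + \tfrac{w}{2} - \Re(s)$ to be strictly negative for every prime $p$, so $0 < u_p < 1$ uniformly in $p$. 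The absolute convergence of $\sum_p d\cdot u_p$ then follows from the same geometric-series domination used in the proof of Theorem \ref{merom}, which in turn yields absolute convergence of the double product indexed by $(p,i)$.

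With these ingredients in place, the rest is pure bookkeeping. The lower bound in Lemma \ref{propest}(2) applied to the index set $\{(p,i)\}$ gives
$$|L_S(M,s,\psi)| \geq \exp\!\left(-\sum_{p,i} |u_{p,i}|\right) = \exp\!\left(-\sum_p d\cdot u_p\right).$$
For the upper bound, Lemma \ref{propest}(2) produces $\sum_{p,i}\bigl|\tfrac{u_{p,i}}{1-u_{p,i}}\bigr|$ in the exponent; the reverse triangle inequality $|1-u_{p,i}| \geq 1 - |u_{p,i}| = 1 - u_p$ bounds each summand by $\tfrac{u_p}{1-u_p}$, producing $\exp\bigl(\sum_p d\cdot \tfrac{u_p}{1-u_p}\bigr)$. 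Together these are precisely (\ref{estformula}).

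The only step with any real content is the uniform estimate $u_p < 1$ for every prime $p$ in the region $\Re(s) > \sigma_a$, and in particular the separate verification at $p = 2$, since Proposition \ref{f}(c) is only formulated for $p \geq 3$. Once this is secured, the remainder of the argument is just the substitution of the uniform identity $|u_{p,i}| = u_p$ into the elementary product bounds of Lemma \ref{propest}.
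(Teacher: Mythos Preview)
Your argument is correct and follows exactly the paper's approach: factor each good Euler factor as $\prod_{i=1}^d (1-\alpha^p c_{p,i} p^{-s})^{-1}$ and apply Lemma~\ref{propest}(2). You have in fact supplied more detail than the paper does---the verification that $u_p<1$ for every prime (including $p=2$) and the reverse triangle inequality step for the upper bound are left implicit there.
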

\begin{proof} Each Euler factor  of $L_S(M,s,\psi)$ can be factorized into \linebreak $ \prod_{i=1}^d (1- \alpha^p c_{p,i} p^{-s})^{-1}$. Now the claim follows from  
Lemma \ref{propest} (2). 
\end{proof}

\begin{exm} Let $f$ be a newform of weight $2$.
The following table shows upper and lower bounds of  $| L(s,f,\psi)|$ for $\alpha=0.7$ as a function of $\sigma=\Re(s)$. \\

\begin{center}
\begin{tabular}{|c|c|c|}
\hline
$\sigma$ & $ \exp\left( - \sum_p  2  \, u_p \right)  $ & $ \exp \left( \sum_p 2\, \frac{u_p}{1-u_p} \right) $ \\
\hline
1 & 0.2670 & 6.0508 \\
\hline
2 & 0.5951 & 1.8248 \\
\hline
3 & 0.7988 & 1.2739 \\
\hline
4 & 0.9024 & 1.1126 \\
\hline
5 & 0.9527 & 1.0507 \\
\hline
6 & 0.9769 & 1.0239 \\
\hline
7 & 0.9887 & 1.0115 \\
\hline
8 & 0.9944 & 1.0056 \\
\hline
9 & 0.9972 & 1.0028 \\
\hline
10 & 0.9986 & 1.0014 \\
\hline
\end{tabular}

\end{center}
\label{magn}
\end{exm}
\vspace*{5mm}

\begin{rem*}
A similar bound holds for the tail of the Euler product, where the Euler factors at all primes $p < X$ are removed. Then the upper and lower bounds for $| L_S(s,f,\psi)|$ are very close to $1$,  showing that for $s$ away from the boundary of convergence, the value is dominated by the first few Euler factors. The corresponding coefficients
$(c(p))_{p \leq X}$ form a {\em signature} (see \cite{dabrowski}  for elliptic curves).
Now we see that the signature determines  $L(s,f,\psi)$ up to controlled factor close to $1$. 
\end{rem*}

\section{Twisting $p$-adic Dirichlet series and Euler products}
In this section, we show that $\psi$-twisting solves a fundamental convergence problem for $p$-adic Dirichlet series, yielding a class of genuine $p$-adic Euler products associated with classical arithmetic functions.  

\subsection{$p$-adic Dirichlet Series}
Let $p$ be an odd prime number. For any $p$-adic unit $a \in \ZZ_p^{\times}$, the Teichmüller character $\omega(a)$ is the unique $(p-1)$-th root of unity satisfying 
 $a \equiv \omega(a) \mod p$. Then we have $a = \omega(a)  \langle a \rangle$, where $ \langle a \rangle \in 1+p\ZZ_p$.
The function $ s \mapsto \langle a \rangle^s = \exp(s \log_p \langle a \rangle)$ is a well defined analytic function for $s \in \CC_p$ in the disk $|s|_p< p^{(p-2)/(p-1)}$.

\begin{dfn} Let $c(n)$ be an arithmetic function with values in $\CC_p$. A $p$-adic Dirichlet series is a series of the form
$$  L_p(s,c) = \sum^{\infty}_{\substack{n=1 \\ p\, \nmid\, n}} c(n) \langle n \rangle^{-s}.  $$
\label{dirichletseries}
\end{dfn}
Unlike the complex case, convergence is straightforward.
Since $|\langle n \rangle^{-s}|_p = 1$ for all $s$ within the disk where  $\langle n \rangle^{-s}$ is defined,
the Dirichlet series converges if and only if the coefficients converge to $0$.

\begin{prop} The $p$-adic Dirichlet series $\displaystyle\sum^{\infty}_{\substack{n=1 \\ p\, \nmid\, n}} c(n) \langle n \rangle^{-s}$ converges in the disk $|s|_p<p^{(p-2)/(p-1)}$
if $\lim_{n \rightarrow \infty} |c(n)|_p = 0$. Otherwise, it diverges for all $s$.
\label{dirichletconv}
\end{prop}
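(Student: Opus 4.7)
The plan is to reduce the convergence question to the ultrametric criterion that a series in $\CC_p$ converges if and only if its general term tends to zero, by showing that the twisting factor $\langle n\rangle^{-s}$ is a $p$-adic unit on the given disk.

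First I would unpack the definition: for $p\nmid n$ and $p$ odd, $\langle n\rangle\in 1+p\ZZ_p$, so $\log_p\langle n\rangle\in p\ZZ_p$, giving the uniform bound $|\log_p\langle n\rangle|_p\leq p^{-1}$. Combined with the hypothesis $|s|_p<p^{(p-2)/(p-1)}$, this yields
\[
|{-s}\log_p\langle n\rangle|_p \;<\; p^{(p-2)/(p-1)}\cdot p^{-1} \;=\; p^{-1/(p-1)},
\]
which is precisely the radius of convergence of the $p$-adic exponential. Hence $\langle n\rangle^{-s}=\exp_p(-s\log_p\langle n\rangle)$ is well defined on this disk, and since $\exp_p$ sends its disk of convergence into $1+\mathfrak{m}_{\CC_p}\subseteq \cO_{\CC_p}^{\times}$, we obtain $|\langle n\rangle^{-s}|_p=1$ for every admissible $n$ and $s$.

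Next I would apply the ultrametric criterion in $\CC_p$: a series $\sum a_n$ converges if and only if $|a_n|_p\to 0$. Here $|c(n)\langle n\rangle^{-s}|_p=|c(n)|_p$ by the previous step, so the series converges uniformly in $s$ on the disk as soon as $|c(n)|_p\to 0$. Conversely, if $|c(n)|_p\not\to 0$, then the general term fails to go to $0$ for any $s$ in the disk, so the series diverges everywhere on the disk.

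There is no real obstacle here; the only subtle point to double-check is the exact arithmetic showing that the radius $p^{(p-2)/(p-1)}$ is precisely what is needed to keep $-s\log_p\langle n\rangle$ inside the disk of convergence of $\exp_p$ uniformly in $n$, which follows from the strict inequality computed above.
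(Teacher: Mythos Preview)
Your proposal is correct and follows essentially the same approach as the paper: the paper does not give a formal proof, but in the paragraph preceding the proposition it observes that $|\langle n\rangle^{-s}|_p=1$ on the disk and concludes via the ultrametric criterion, which is exactly your argument with the estimate on $\exp_p$ spelled out.
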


This presents a significant obstacle. For most classical arithmetic functions, like Dirichlet characters $\chi$, the coefficients satisfy $|c(n)|_p = 1$ and the corresponding Dirichlet series diverges. In fact, $p$-adic $L$-functions are constructed by interpolating special values of the complex $L$-function. There had been some progress regarding Dirichlet series expansions of $p$-adic $L$-functions (see \cite{delbourgo2006,delbourgo2009, KnWa,zhao}), but these expansions are limits of certain partial sums and not $p$-adic Dirichlet series. 

We will show in Section \ref{psipadic} that $\psi$-twisting produces
a convergent $p$-adic Dirichlet series which has desirable analytic properties, including an Euler product expansion.

 \subsection{$p$-adic Euler Products and Analyticity }
We show that a convergent $p$-adic Dirichlet series  is an analytic function in $s$ and admits an Euler product if the coefficients are multiplicative.

 \begin{prop} If $\lim_{n \rightarrow \infty} |c(n)|_p = 0$, the function $L_p(s,c)$ is analytic in the disk 
 $|s|_p<p^{(p-2)/(p-1)}$ and its Mahler expansion is given by
 $$  L_p(s,c)
 = \sum_{n=0}^{\infty} \left( \sum_{\substack {a=1 \\ (a,p)=1}}^{\infty} c(a) (\langle a \rangle - 1)^n \right) \binom{-s}{n}. $$
 Furthermore, if the coefficients $c(n)$ are multiplicative, the series admits a convergent $p$-adic Euler product
 $$  L_p(s,c) =
\prod_{ l \neq p}  \left(1 + c(l) \langle l \rangle^{-s} + c(l^2) \langle l \rangle^{-2s} + \dots \right). $$
If the coefficients are completely multiplicative then 
 $$  L_p(s,c) =
\prod_{ l \neq p}  (1- c(l) \langle l \rangle^{-s})^{-1} . $$
\label{eulerproduct}
\end{prop}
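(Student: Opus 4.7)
The plan is to organise the proof around the three assertions in the proposition in order: analyticity together with the Mahler expansion first, then the Euler product in the multiplicative case, and finally the geometric-series specialisation when $c$ is completely multiplicative.

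For analyticity, I would observe that on the disk $|s|_p < p^{(p-2)/(p-1)}$ each $\langle a\rangle^{-s} = \exp(-s\log_p\langle a\rangle)$ is well defined and analytic with $|\langle a\rangle^{-s}|_p = 1$, so the hypothesis $|c(n)|_p\to 0$ gives uniform convergence of $L_p(s,c)$ on the disk and hence analyticity. To extract the Mahler expansion, I would substitute the binomial identity
\[
\langle a\rangle^{-s} \;=\; \sum_{n=0}^{\infty}\binom{-s}{n}(\langle a\rangle-1)^n,
\]
which is valid on the same disk because $|\langle a\rangle - 1|_p \le p^{-1}$, and then interchange the summation over $a$ and over $n$. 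The interchange is justified by the $p$-adic double series principle: one checks that the terms $c(a)(\langle a\rangle-1)^n\binom{-s}{n}$ tend to $0$ in $\Cp$ as $a+n\to\infty$. Decay in $a$ is given by $|c(a)|_p\to 0$, uniformly in $n$ since the remaining factor has absolute value at most $1$, while decay in $n$ for fixed $a$ is precisely the content of convergence of the binomial series at the chosen $s$.

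For the Euler product in the multiplicative case, I would first note that $\langle\,\cdot\,\rangle$ is multiplicative on $\Zp^{\times}$ since $\omega$ is, and combine this with multiplicativity of $c$. A finite truncated product $\prod_{\ell<X,\,\ell\ne p}\sum_{k\ge 0}c(\ell^k)\langle\ell\rangle^{-ks}$ then expands, by unique factorisation, to $\sum_n c(n)\langle n\rangle^{-s}$ restricted to $n$ whose prime divisors are all $<X$ and different from $p$. The difference from $L_p(s,c)$ is a tail sum over $n\ge X$, which has $p$-adic absolute value at most $\sup\{|c(n)|_p:n\ge X\}\to 0$. Convergence of the infinite product itself is immediate in the ultrametric setting: the $\ell$-th factor differs from $1$ by a quantity bounded in absolute value by $\sup_{k\ge 1}|c(\ell^k)|_p$, which again tends to $0$ as $\ell\to\infty$. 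For the completely multiplicative specialisation, the hypothesis $|c(n)|_p \to 0$ combined with $c(\ell^k)=c(\ell)^k$ forces $|c(\ell)|_p<1$ for every prime $\ell\ne p$, so each inner sum collapses to the geometric series $\sum_{k\ge 0}(c(\ell)\langle\ell\rangle^{-s})^k = (1-c(\ell)\langle\ell\rangle^{-s})^{-1}$.

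I expect the main technical hurdle to be the double series justification for the Mahler expansion, because it is the one place where convergence has to be controlled in two independent indices at once; by contrast, the Euler product manipulations reduce to a single ultrametric tail estimate and the completely multiplicative case is then purely formal.
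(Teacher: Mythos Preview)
Your proposal is correct and follows essentially the same route as the paper: the binomial expansion of $\langle a\rangle^{-s}$ followed by an interchange of summations for the Mahler expansion, and truncated products $P_X(s)$ compared to $L_p(s,c)$ via a tail estimate for the Euler product. One small imprecision: the difference $L_p(s,c)-P_X(s)$ is not literally the tail over all $n\ge X$ but rather over those $n$ with some prime factor $\ge X$; since every such $n$ satisfies $n\ge X$, your bound $\sup_{n\ge X}|c(n)|_p$ still applies and the argument goes through.
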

 \begin{proof} 
 We use the formula
$$  {\langle a \rangle}^{s} = \sum_{n=0}^{\infty} \binom{s}{n}  (\langle a \rangle - 1)^n $$
for $a \in \Zp^{\times}$ and let $s \in \Cp$ with $|s|_p < p^{(p-2)/(p-1)}$ (see \cite{Wa2} p. 54). Then
\begin{align} 
\label{function}
\sum^{\infty}_{\substack{a=1 \\ (a,p)=1}} c(a) {\langle a \rangle}^{-s}     & =   
\sum^{\infty}_{\substack{a=1 \\ (a,p)=1}} \sum_{n=0}^{\infty}  c(a)  \binom{-s}{n}  (\langle a \rangle - 1)^n  \\
& = \ \sum_{n=0}^{\infty} \sum_{\substack {a=1 \\ (a,p)=1}}^{\infty} c(a) (\langle a \rangle - 1)^n  \binom{-s}{n}.  \nonumber
\end{align}
We can change the order of summation  since $c(a)  \binom{-s}{n}  (\langle a \rangle - 1)^n$ converges 
to $0$ uniformly in $a$ and $n$ for each $s$. Since $|\langle a \rangle - 1|_p < \frac{1}{p}$ and $|c(a)|_p \leq C$ for all $a$ with $p\nmid a$ for some constant $C$, the inner sum satisfies
$$ \left | \sum_{\substack {a=1 \\ (a,p)=1}}^{\infty} c(a) (\langle a \rangle - 1)^n \right|_p \leq  \frac{C}{p^n} . $$
Therefore, the $p$-adic function (\ref{function}) is analytic and the radius of convergence is (at least) $p^{(p-2)/(p-1)}$ (see \cite{Wa2} 5.8).

For the Euler product, 
 let $X>0$ and consider the finite product
\begin{equation}
 P_X (s) = \prod_{\substack{ l \neq p \\ l \leq X}}  \left(1 + c(l) \langle l \rangle^{-s} + c(l^2) \langle l \rangle^{-2s} + \dots \right)
 \label{finprod}
 \end{equation}
 over primes $l$ with $l \neq p$ and $l \leq X$. 
 Since $\lim_{n \rightarrow \infty} |c(n)|_p = 0$, 
each factor of (\ref{finprod}) converges. The terms in $P_X (s)$ can be rearranged and, using the multiplicativity of the coefficients, one obtains the series
$$ P_X(s)=  \sum^{\infty}_{\substack{n=1 \\ p\, \nmid\, n \\ l\mid n \Rightarrow l \leq X}} c(n) \langle n \rangle^{-s}$$
over positive integers $n$ which are not divisible by $p$ and divisible only by primes $\leq X$.  As $X \rightarrow \infty$,  the difference $L_p(s,c) - P_X(s)$ tends to zero because the coefficients $c(n)$ converge to zero as $n \rightarrow \infty$.
This establishes the convergence of the Euler product.  If $c(n)$ is completely multiplicative, then each factor of (\ref{finprod}) is a convergent geometric series. This  completes the proof. 
 \end{proof}

 \subsection{Twisting $p$-adic Dirichlet series}
\label{psipadic}

We now apply  the $\psi$-twist, choosing a parameter $\alpha$ such that $|\alpha|_p <1$, e.g., $\alpha=p$.

\begin{dfn} The  $\psi$-twisted $p$-adic Dirichlet series is defined as
$$ L_p(s,c,\psi) = \sum^{\infty}_{\substack{n=1 \\ (n,p)=1}} \psi(n) c(n) {\langle n \rangle}^{-s} = \sum^{\infty}_{\substack{n=1 \\ (n,p)=1}} \alpha^{S(n)} c(n) {\langle n \rangle}^{-s} .$$
\end{dfn}

The following proposition shows that the twist acts as a convergence factor.

\begin{prop} Let $|\alpha|_p<1$. If the coefficients $|c(n)|_p$ are bounded, the $\psi$-twisted $p$-adic Dirichlet series $L_p(s,c,\psi)$ converges for  $|s|_p < p^{(p-2)/(p-1)}$.
\label{padicconv}
\end{prop}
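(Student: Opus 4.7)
The plan is to reduce the claim to the earlier convergence criterion for $p$-adic Dirichlet series (Proposition \ref{dirichletconv}), which says that the untwisted series $\sum c'(n) \langle n \rangle^{-s}$ converges on the disk $|s|_p < p^{(p-2)/(p-1)}$ as soon as its coefficients $c'(n)$ tend to $0$ in $\CC_p$. Here the twisted coefficients are $c'(n) = \alpha^{S(n)} c(n)$, so everything boils down to showing that $|\alpha^{S(n)} c(n)|_p \to 0$ as $n \to \infty$.

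First I would invoke the hypothesis that the $|c(n)|_p$ are bounded by some constant $C$, so it suffices to bound $|\alpha^{S(n)}|_p = |\alpha|_p^{S(n)}$. Next I would apply Proposition \ref{f}(c), which gives the uniform lower bound $S(n) \geq \frac{3}{\log(3)} \log(n)$ for every $n \in \NN$. In particular $S(n) \to \infty$ as $n \to \infty$. Combined with $|\alpha|_p < 1$, this yields
$$ |\alpha^{S(n)} c(n)|_p \leq C\, |\alpha|_p^{S(n)} \leq C\, |\alpha|_p^{\frac{3}{\log(3)} \log(n)} \longrightarrow 0 \quad (n \to \infty). $$

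With the coefficients going to $0$, Proposition \ref{dirichletconv} applied to the arithmetic function $n \mapsto \alpha^{S(n)} c(n)$ (restricted to $n$ coprime to $p$) immediately gives convergence of $L_p(s,c,\psi)$ throughout the disk $|s|_p < p^{(p-2)/(p-1)}$. There is no real obstacle here: the only subtlety is ensuring that the archimedean lower bound on $S(n)$ translates into the non-archimedean decay of $|\alpha|_p^{S(n)}$, but this works without change because $S(n)$ is a positive integer exponent and $|\alpha|_p < 1$ makes larger exponents produce smaller $p$-adic absolute values.
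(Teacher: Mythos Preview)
Your proof is correct and follows exactly the same approach as the paper: show the twisted coefficients $\alpha^{S(n)} c(n)$ tend to $0$ using the boundedness of $|c(n)|_p$, $|\alpha|_p<1$, and $S(n)\to\infty$, then apply Proposition~\ref{dirichletconv}. The only difference is that you invoke the explicit lower bound from Proposition~\ref{f}(c) for $S(n)$, whereas the paper simply asserts $S(n)\to\infty$; this extra precision is harmless and the argument is the same.
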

\begin{proof} The $p$-adic absolute value of the new coefficients is $| \alpha^{S(n)} c(n) |_p$. Since $|c(n)|_p$ is bounded, $|\alpha|_p <1$, and $S(n) \rightarrow \infty$ as $n \rightarrow \infty$, the twisted coefficients converge to $0$. Then the result follows from Proposition \ref{dirichletconv}.
\end{proof}

Combining Propositions \ref{eulerproduct} and \ref{padicconv}, we arrive at the main theorem of this section.

\begin{thm} Let the coefficients $|c(n)|_p$ be bounded and let $|\alpha|_p<1$. The $\psi$-twisted $p$-adic Dirichlet series $L_p(s,c,\psi)$ 
defines an analytic function  for $|s|_p<p^{(p-2)/(p-1)}$, with Mahler expansion
$$ L_p(s,c,\psi) = \sum_{n=0}^{\infty} \left( \sum_{\substack {a=1 \\ (a,p)=1}}^{\infty} \alpha^{S(a)} c(a) (\langle a \rangle - 1)^n \right) \binom{-s}{n} .$$
If $c(n)$ is completely multiplicative, the function has a convergent Euler product
$$  L_p(s,c) =
\prod_{ l \neq p}  (1- \alpha^l c(l) \langle l \rangle^{-s})^{-1} . $$
\label{analytic}
\end{thm}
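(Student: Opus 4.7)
The plan is to reduce Theorem \ref{analytic} entirely to the already-established Proposition \ref{eulerproduct}, applied not to $c(n)$ directly but to the new coefficient sequence $\tilde c(n) := \alpha^{S(n)} c(n)$. Since $L_p(s,c,\psi) = L_p(s,\tilde c)$ by definition, the entire theorem will follow once we verify that $\tilde c$ satisfies the hypotheses of Proposition \ref{eulerproduct}, namely $|\tilde c(n)|_p \to 0$ as $n\to\infty$, and that complete multiplicativity is preserved by the twist.

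First, I would establish the decay $|\tilde c(n)|_p \to 0$. By hypothesis there is a constant $C$ with $|c(n)|_p \leq C$ for all $n$ coprime to $p$, so
\[
|\tilde c(n)|_p \;=\; |\alpha|_p^{S(n)}\, |c(n)|_p \;\leq\; C\, |\alpha|_p^{S(n)}.
\]
Proposition \ref{f}(c) gives $S(n) \geq \frac{3}{\log(3)}\log(n)$, and since $|\alpha|_p < 1$ we have $|\alpha|_p^{S(n)} \to 0$ as $n\to\infty$. This is precisely the convergence criterion of Proposition \ref{dirichletconv}, so $L_p(s,c,\psi)$ converges on the disk $|s|_p < p^{(p-2)/(p-1)}$, and the analyticity together with the Mahler expansion is then a direct quotation of Proposition \ref{eulerproduct} applied to $\tilde c$.

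For the Euler product in the completely multiplicative case, I would first note that the twist preserves complete multiplicativity: by Proposition \ref{f}(b) the function $\psi(n)=\alpha^{S(n)}$ is completely multiplicative, and the pointwise product of two completely multiplicative functions is completely multiplicative, so $\tilde c(n) = \psi(n)c(n)$ is completely multiplicative whenever $c$ is. Proposition \ref{eulerproduct} then yields
\[
L_p(s,c,\psi) \;=\; \prod_{l \neq p} \bigl(1 - \tilde c(l)\,\langle l \rangle^{-s}\bigr)^{-1} \;=\; \prod_{l \neq p} \bigl(1 - \alpha^{S(l)} c(l)\,\langle l \rangle^{-s}\bigr)^{-1},
\]
and since $l$ is prime we have $S(l)=l$, giving the stated formula.

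The only step that requires any care is the verification that $|\tilde c(n)|_p \to 0$, since this is what unlocks all of Proposition \ref{eulerproduct}; but the lower bound $S(n) \geq \tfrac{3}{\log 3}\log n$ makes this essentially automatic. Everything else is bookkeeping: substitute $\tilde c$ for $c$ in the Mahler expansion and Euler product already established in Proposition \ref{eulerproduct}, and rewrite $S(l)=l$ at primes. No new analytic input is needed beyond what Section 4.2 has already supplied.
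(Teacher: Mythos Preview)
Your proposal is correct and follows essentially the same route as the paper: reduce everything to Proposition~\ref{eulerproduct} applied to the twisted coefficients $\tilde c(n)=\alpha^{S(n)}c(n)$, after checking that $|\tilde c(n)|_p\to 0$. The only cosmetic difference is that you invoke the explicit bound $S(n)\geq \tfrac{3}{\log 3}\log n$ from Proposition~\ref{f}(c), whereas the paper simply uses $S(n)\to\infty$ (via the preceding proposition); either suffices.
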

\begin{proof} Convergence is established by the preceding proposition. The Mahler expansion and the existence of the Euler product then follow from applying 
Proposition \ref{eulerproduct} to the twisted coefficients $\psi(n) c(n)$.
\end{proof}

\begin{rem} Can  the sequence of $\psi$-twisted Dirichlet series converge to a classical $p$-adic $L$-function (or to any other function) as $|\alpha|_p$ approaches $1$\,? In contrast to the complex case (see Propositions \ref{limit} and \ref{limitcrit}), this cannot be true. In fact, each coefficient of the Mahler expansion would have to converge as $|\alpha|_p \rightarrow 1-$.
But for $|\alpha|_p>|\beta|_p$, the strong triangle inequality gives $| \alpha - \beta|_p = |\alpha|_p$. Therefore, the Mahler coefficients cannot converge as $|\alpha|_p$ tends to $1$. 
\end{rem}

\begin{cor} The special values of the $\psi$-twisted Dirichlet series are:
\begin{align*}
L_p(-1,c,\psi)=  & \sum^{\infty}_{\substack{n=1 \\ (n,p)=1}} \alpha^{S(n)} c(n)  \langle n \rangle \\
L_p(0,c,\psi) =& \sum^{\infty}_{\substack{n=1 \\ (n,p)=1}} \alpha^{S(n)} c(n)  \\
L_p(1,c,\psi) =&  \sum_{n=0}^{\infty} (-1)^n  \sum_{\substack {a=1 \\ (a,p)=1}}^{\infty} \alpha^{S(a)} c(a) (\langle a \rangle - 1)^n   \\
L_p(2,c,\psi) =&  \sum_{n=0}^{\infty} (-1)^n (n+1) \sum_{\substack {a=1 \\ (a,p)=1}}^{\infty} \alpha^{S(a)} c(a) (\langle a \rangle - 1)^n 
\end{align*}
\end{cor}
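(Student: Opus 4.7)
The plan is to simply specialise the two expressions for $L_p(s,c,\psi)$ provided by Theorem \ref{analytic}: the Dirichlet series itself for the nonpositive integer arguments $s=-1$ and $s=0$, and the Mahler expansion for the positive integer arguments $s=1$ and $s=2$. Before doing so I would verify that all four values lie in the disk of analyticity. Since $p$ is an odd prime, $p^{(p-2)/(p-1)}>1$, and for any nonzero integer $m$ the $p$-adic absolute value satisfies $|m|_p\leq 1$; in particular $|{-1}|_p=|0|_p=|1|_p=|2|_p\leq 1 < p^{(p-2)/(p-1)}$, so the convergence hypothesis of Theorem \ref{analytic} applies.

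For the first two identities I would substitute $s=0$ and $s=-1$ directly into the definition
\[
L_p(s,c,\psi) = \sum^{\infty}_{\substack{n=1 \\ (n,p)=1}} \alpha^{S(n)} c(n)\, \langle n \rangle^{-s},
\]
using $\langle n \rangle^{0}=1$ and $\langle n \rangle^{1}=\langle n \rangle$ to read off the two formulas verbatim. No rearrangement is needed beyond invoking the absolute convergence supplied by Theorem \ref{analytic}.

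For the positive-integer values I would instead use the Mahler expansion from Theorem \ref{analytic},
\[
L_p(s,c,\psi) = \sum_{n=0}^{\infty} \left( \sum_{\substack {a=1 \\ (a,p)=1}}^{\infty} \alpha^{S(a)} c(a) (\langle a \rangle - 1)^n \right) \binom{-s}{n},
\]
and plug in $s=1$ and $s=2$. The only computation is the identification of the binomial coefficients: $\binom{-1}{n}=(-1)^n$ and $\binom{-2}{n}=(-1)^n(n+1)$, both obtained from the standard identity $\binom{-s}{n}=(-1)^n\binom{s+n-1}{n}$. These substitutions yield the stated expressions for $L_p(1,c,\psi)$ and $L_p(2,c,\psi)$.

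There is no genuine obstacle here: the corollary is a direct read-off from Theorem \ref{analytic}. The only mild point worth flagging in the write-up is that for $s=1,2$ one must use the Mahler expansion rather than the original Dirichlet series, because the series $\sum \alpha^{S(n)} c(n) \langle n \rangle^{-s}$ on the nose is simply the object being evaluated, whereas the Mahler expansion rewrites it as an explicit convergent double sum of elements of $\CC_p$ that can be summed in either order thanks to the uniform convergence established in the proof of Proposition \ref{eulerproduct}.
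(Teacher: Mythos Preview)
Your proposal is correct and matches the paper's approach: the corollary is stated without proof, as it is an immediate specialisation of Theorem \ref{analytic}, and your substitutions (direct Dirichlet series for $s=-1,0$ and the Mahler expansion with $\binom{-1}{n}=(-1)^n$, $\binom{-2}{n}=(-1)^n(n+1)$ for $s=1,2$) are exactly what is intended.
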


\begin{cor} For a Dirichlet character $\chi$  and $|\alpha|_p<1$, the twisted series $L_p(s,\psi \chi)$ is an analytic function for $|s|_p<p^{(p-2)/(p-1)}$ with the Euler product
$$  L_p(s,\psi \chi) = \sum^{\infty}_{\substack{n=1 \\ (n,p)=1}} \psi (n) \chi (n)  {\langle n \rangle}^{-s}   
= \prod_{\substack{ l \neq p \\  \text{\tiny $l$ prime}}} (1- \alpha^l \chi (l) \langle l \rangle^{-s})^{-1} . $$

\label{padiceuler}
\end{cor}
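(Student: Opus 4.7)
The plan is to derive this corollary as a direct specialisation of Theorem \ref{analytic} with coefficient sequence $c(n)=\chi(n)$. First I would verify the two hypotheses of Theorem \ref{analytic}. Boundedness of $|c(n)|_p$ is immediate: the nonzero values of a Dirichlet character are roots of unity, hence $p$-adic units, so $|\chi(n)|_p \in \{0,1\}$ for every $n$. Complete multiplicativity of $\chi$ is one of the defining properties of a Dirichlet character. Combined with the assumption $|\alpha|_p<1$, both clauses of Theorem \ref{analytic} apply.

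Next I would invoke Theorem \ref{analytic} verbatim. Convergence and analyticity of $L_p(s,\chi\psi)$ in the disk $|s|_p<p^{(p-2)/(p-1)}$ come from the first assertion there. For the Euler product, the completely-multiplicative clause of Theorem \ref{analytic} gives
\[
 L_p(s,\chi\psi) \;=\; \prod_{l\neq p}\bigl(1-\alpha^l\chi(l)\langle l\rangle^{-s}\bigr)^{-1}.
\]
Here no issue arises at primes $l$ dividing the conductor of $\chi$: at such primes $\chi(l)=0$, so the local factor is simply $1$ and contributes trivially.

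There is essentially no obstacle; the corollary is a packaging statement. The only mild point worth flagging in the write-up is the $p$-adic absolute value of character values (roots of unity are $p$-adic units, giving $|\chi(n)|_p\le 1$), which is what legitimises the boundedness hypothesis. I would keep the proof to a single sentence: \emph{Apply Theorem \ref{analytic} with $c(n)=\chi(n)$, noting that $\chi$ is completely multiplicative and $|\chi(n)|_p\le 1$.}
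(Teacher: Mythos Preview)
Your proposal is correct and matches the paper's approach: the corollary is stated without proof precisely because it is the immediate specialisation of Theorem \ref{analytic} to $c(n)=\chi(n)$, using that Dirichlet characters are completely multiplicative with $|\chi(n)|_p\le 1$. Your one-sentence write-up is exactly what is needed.
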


\begin{rem} This construction provides a class of genuine $p$-adic Dirichlet series and Euler products associated to classical arithmetic objects like Dirichlet characters.
The functions
$$ \xi_n (s) = \alpha^{S(n)} \langle n \rangle ^{-s} $$
where $n \in \NN$ and $p \nmid n$, form a completely multiplicative system of analytic functions that converge to $0$ as $n \rightarrow \infty$, providing a 
 concrete realisation of the abstract space of {\em shadow $\nabla$-functions} considered by Delbourgo  (see \cite{delbourgo2009}). Our future work will explore the relationship between these new analytic functions and number-theoretic properties.
 \end{rem}

 \begin{exm} In a similar way as the Riemann zeta function, the expansion of the $p$-adic analogue is related to the values of many classical arithmetic functions. The following table contains the Dirichlet coefficients $c(n)$ in the expansion $\sum_{p \nmid n} c(n) \langle n \rangle^{-s}$ of certain products, quotients and translations of the twisted $p$-adic zeta functions $L_p(s,\psi \omega^k)$ associated to the Teichmuller characters $\omega^k$, where $k=0,\dots,p-2$. As usual, $d(n)$ denotes the number of divisors, $\tau(n)$ the sum of divisors, $\sigma_k(n)$  the sum of $k$-th powers of divisors of $n$ and $\phi(n)$ Euler's totient function.\\
 
 \begin{tabular}{|l|c|}
  \hline
 $p$-adic Dirichlet series & $n$-th Dirichlet coefficient, $p\nmid n$ \\
 \hline
 $L_p(s, \psi)^2$ & $\alpha^{S(n)} d(n)$ \\
  \hline
 $L_p(s,\psi) L_p(s-1,\psi \omega)$ & $\alpha^{S(n)} \tau(n)$ \\
  \hline
  $L_p(s,\psi) L_p(s-k,\psi \omega^k)$ & $\alpha^{S(n)} \sigma_k(n)$ \\
   \hline
  $L_p(s-1,\psi \omega) / L_p(s,\psi)$ & $\alpha^{S(n)} \phi(n)$ \\
   \hline
  \end{tabular}\\
 
 This table can be continued with similar relationships. The proof is along the same lines as in the complex case with small adjustments. Note that $\omega^k(n) \langle n \rangle^{-(s-k)} = n^k  \langle n \rangle^{-s}$ for $p \nmid n$. Furthermore, we use the relation $\alpha^{S(d)} \alpha^{S(n/d)} = \alpha^{S(n)}$ for $d \mid n$.
 
 \end{exm}

\bigskip

\noindent {\em Acknowledgments.} The authors thank Jerzy Kaczorowski for his valuable support. We are also indebted to Daniel Delbourgo for helpful discussions. We would also like to thank Christopher Deninger for his comments.

\bigskip

\end{document}